\begin{document}

\renewcommand{\theequation}{\arabic{section}.\arabic{equation}}

\newtheorem{theorem}{\bf Theorem}[section]
\newtheorem{lemma}{\bf Lemma}[section]
\newtheorem{proposition}{\bf Proposition}[section]
\newtheorem{corollary}{\bf Corollary}[section]
\newtheorem{remark}{\bf Remark}[section]
\newtheorem{example}{\bf Example}[section]
\newtheorem{definition}{\bf Definition}[section]

\newcommand{\supp}{\mathop{\mathrm{supp}}}
\newcommand{\PA}{\partial}
\newcommand{\ve}{\varepsilon}
\newcommand{\TR}{\textcolor{red}}
\newcommand{\TB}{\textcolor{blue}}
\newcommand{\ds}\displaystyle

\newcommand{\re}{{\Bbb R}}
\newcommand{\nid}{\noindent}

\newcommand{\nn}{\nonumber}
\newcommand{\hu}{\hat{u}}
\newcommand{\tw}{\widetilde{w}}
\newcommand{\D}{D_0^{1,2}}
\newcommand{\DK}{D_{0,K}^{1,2}}
\newcommand{\rhp}{\rightharpoonup}
\def\R{{\mathbb R}}
\newcommand{\la}{\langle}
\newcommand{\ra}{\rangle}
\newcommand{\ls}{\{}
\newcommand{\rs}{\}_{n=1,2\cdots}}
\newcommand{\hv}{\hat{v}}
\newcommand{\hvp}{\widehat{\vp}}
\newcommand{\hw}{\widehat{w}}
\newcommand{\cv}{\bar{v}}
\newenvironment{prf}[1]
   {{\noindent \bf Proof of {#1}.}}{\hfill \qed}
\newcommand{\eq}[1]{{\begin{equation}#1\end{equation}}}

%%%%%%%%%%%%%%%%
\makeatletter
 \@addtoreset{equation}{section}
\makeatother
\def\theequation{\arabic{section}.\arabic{equation}}
%%%%%%%%%%%%%%%%

\title{Non-uniqueness of mild solutions for 2d\,-\,heat equations \\ with singular initial data}

\author{\small
	Yohei Fujishima\\
	\small
	Department of Mathematical and Systems Engineering,
	Faculty of Engineering,
	\\
	\small
	Shizuoka University, \\
	\small
	3-5-1 Johoku, Hamamatsu 432-8561, Japan, 
	%\\
	%\small
	%{fujishima@shizuoka.ac.jp}
	\vspace{5pt}\\
	\quad\\
	\small
	Norisuke Ioku
	\\
	\small
	Mathematical Institute, 
	\\
	\small
	Tohoku University, \\
	\small
	Aramaki 6-3, Sendai 980-8578, Japan,
	%\\
	%\small
	%{ioku@tohoku.ac.jp}
	\vspace{5pt}\\
	\quad\\
	\small
	Bernhard Ruf
	\\
	\small
	Accademia di Scienze e Lettere - Istituto Lombardo,\\
	\small
	via Borgonuovo, 25, Milano 20121, Italy,
	%\\
	%\small
	%{bruf001@gmail.com}
	\\
	\quad\\
	{\small and}\\
	\quad\\
	\small
	Elide Terraneo\footnote{Corresponding author: elide.terraneo@unimi.it (E. Terraneo)}\\
	\small
	Dipartimento di Matematica ``F. Enriques'',\\
	\small
	Universit\`a degli Studi di Milano,\\
	\small
	via C. Saldini 50, Milano 20133, Italy.
	%\\
	%\small
	%{elide.terraneo@unimi.it}
}

%\author{Yohei Fujishima, Norisuke Ioku, Bernhard Ruf and Elide Terraneo}
\date{
}
\maketitle

\abstract{In a recent article by the authors \cite{FIRT1} it was shown that wide classes of semilinear elliptic equations with exponential type nonlinearities admit singular radial solutions $U$ on the punctured disc in $\mathbb R^2$ which are also distributional solutions on the whole disc. We show here that these solutions, taken as initial data of the associated heat equation, give rise to non-uniqueness of {\it mild solutions}: ${u_s}(t,x) \equiv U(x)$ is a stationary solution, and there exists also a solution ${u_r}(t,x)$ departing from $U$ which is bounded for $t > 0$. While such non-uniqueness results have been known in higher dimensions 
by
Ni--Sacks~\cite{NS}, Terraneo~\cite{Te} and Galaktionov--Vazquez~\cite{GV}, only two very specific results have recently been obtained in two dimensions by Ioku--Ruf--Terraneo \cite{IRT1} and Ibrahim--Kikuchi--Nakanishi--Wei~\cite{IKNW}.
}

\medskip
\noindent
{\small
	{\bf Keywords}: 
	Semilinear heat  equations, non-uniqueness , 2-dimensions, exponential nonlinearities
	\vspace{5pt}
	\newline
	{\bf 2020 MSC}: Primary: 35K58; Secondly: 35A01, 35A02
	\vspace{5pt}
}

\section{Introduction}

In this paper we deal with  the Cauchy problem for a semilinear heat equation
\begin{equation}\label{eq:1.1}
\left\{
\begin{aligned}
&\partial_t u-\Delta u
=f(u)
&&  \text{in}\ \ (0,T)\times B_R,\\ \ \ 
&u(t,\cdot) = 0 \ &&  \hbox{on } (0,T)\times \partial B_R, \\
&u(0,x)
=u_0(x)\geq 0 && \text{in}\ \ B_R,
\end{aligned}
\right.
\end{equation}
where
 $B_R\subset \R^2$ is a  ball of radius $R>0$ centered at the origin,  $f\in C^1([0,\infty))$, $u(t,x):[0,T)\times B_R\to [0,\infty)$ is the unknown	function
and $u_0$ is a  given initial data.
 We prove, for a large class of nonlinearities and for some particular unbounded data, that, at least, two {\it mild} solutions exist. The unbounded data  we consider are some {\it singular  positive  solutions} that solve the
 equation
  \begin{equation}\label{eq:poli}
 -\Delta u=f(u).
 \end{equation}
 By {\it singular positive solution} we mean  a positive  function $U\in C^2(B_R\setminus \{0\})$, radially symmetric, with  $\lim_{x\to 0}U(x)=\infty$, that satisfies \eqref{eq:poli} in $B_R\setminus \{0\}$ in the classical sense.

  We  first recall some known results  for specific nonlinearities in $\R^N$, with $N\geq 3$. 
In $\R^N$, with $N\geq 3$, 
  for $f(u)=u^p$, $p>1$,  the existence of  singular  solutions to \eqref{eq:poli} is well-established. The first result is due to Lions     in \cite{L}, who obtained the existence of a singular solution on $B_R\setminus \{0\}$, for $1<p<\frac{N}{N-2}$. This solution behaves like the fundamental solution of the Laplace equation, i.e.  $U(x)\sim|x|^{2-N}$ as $x\to 0$.
 For  $\frac{N}{N-2}\leq p<\frac{N+2}{N-2}$, Ni--Sacks~\cite{NS} (see also Aviles~\cite{A} and Chen--Lin~\cite{CL}) proved the existence of infinitely many singular solutions to \eqref{eq:poli}. Moreover,   $U(x) \sim |x|^{2-N}(-\log|x|)^{\frac{2-N}2}$ near $0$, if $p=\frac{N}{N-2}$, and  $U(x) \sim |x|^{-\frac{2}{p-1}}$ near $0$, if $\frac{N}{N-2}<p<\frac{N+2}{N-2}$. Finally, the case of $p\geq\frac{N+2}{N-2}$ has been treated in  \cite{CGS}.
Moreover, these singular solutions  satisfy the equation~\eqref{eq:poli} in the non-punctured ball in the sense of distributions only  for  $p \geq \frac{N}{N-2}$.
 
In  the particular  case $p=\frac{N}{N-2}$,  Ni--Sacks \cite{NS}  proved that the Cauchy problem \eqref{eq:1.1}  with initial data $u_0(x)=U(x)$ admits at least two solutions:
 the  stationary singular solution  $u_s(t,x)=U(x)$, $t>0$  and a regular  solution which is bounded for positive time. Thus non-uniqueness occurs. Terraneo \cite{Te}  extended this result to $\R^N$
 (see also 
\cite{MT,Ta}).
 
We mention another non-uniqueness result in $\R^N$, with $N\geq 3$, in the case $f(u)=u^p$, with $p >\frac{N}{N-2}$. In $\R^N$  the function  
$V_p(x)=L_p\, |x|^{-\frac{2}{p-1}}$ with $
L_p
:=
\big\{
\frac{2}{p-1}\big(N-2-\frac{2}{p-1}\big)
\big\}^{\frac{1}{p-1}}, $
is an exact singular solution to the  elliptic equation \eqref{eq:poli}  and it is 
  also a  distributional solution in $\R^N$.
   Using this explicit singular solution,  Galaktionov--Vazquez \cite{GV}
   proved that with $u_0(x)=V_p(x)$ the Cauchy problem \eqref{eq:1.1} in $\mathbb{R}^N$ admits a 
	classical solution  $u_r(t,x)$
	that converges to $V_p(x)$ in the sense of distributions as $t\to 0$,  for any $p>\frac{N}{N-2}$, if $3\leq N\leq 10 $, and for $\frac{N}{N-2}<p<1+\frac{4}{N-4-2\sqrt{N-1}}$, if $N\geq 11$. Thus, the Cauchy problem \eqref{eq:1.1} in $\mathbb{R}^N$ with $f(u) = u^p$ and $u_0(x)=V_p(x)$ has (for these $p$) at least two different solutions:
the stationary solution $u_s(t,x)=V_p(x)$ and the regular solution $u_r(t,x)$.
Other type of non-uniqueness results regarding to self-similar solutions can be found in \cite{HW}.
Moreover, we remark that the singularity of $V_p$ is in some sense a threshold for existence and non-existence of a solution to \eqref{eq:1.1}. 
Indeed, 
 $V_p$ belongs to $L^{\frac{N(p-1)}2,\infty}$ and it is known that for any $u_0\in L^q$ for either $q=\frac{N(p-1)}2>1$ or $q>\frac{N(p-1)}2$ and $q\geq 1$, then problem \eqref{eq:1.1} has a local-in-time solution, while  for $1\leq q<\frac{N(p-1)}2$ there exists a nonnegative initial function  $u_0\in L^q$ 
 such that problem \eqref{eq:1.1} has no nonnegative
local-in-time solutions  (see \cite{W1}, \cite{W3}). 
Furthermore, 
it was proved by Baras--Pierre~\cite{BP} and Robinson--Sier\.{z}\polhk{e}ga~\cite{RS}
that 
for $p>1+\frac{2}{N}$ and the initial data $u_0=\varepsilon V_p$, 
problem 
\eqref{eq:1.1} has a local-in-time solution if $\varepsilon $ is sufficiently small, 
while
problem 
\eqref{eq:1.1} has no nonnegative local-in-time solutions if $\varepsilon $ is sufficiently large.
Some developments in these directions can be found in 
\cite{FI2,FI3,FI4,FHIL,HI01,IKO01,IKO02,Na,Na2,S,SW}.

  Galaktionov--Vazquez \cite{GV} consider also  exponential nonlinearities. They remark that
$U(x)=-2\log|x|$ solves  the elliptic equation \eqref{eq:poli} with
$f(u)=2(N-2)\,e^u$ on $B_1\setminus\{0\}$   and they prove a similar  non-uniqueness result for the corresponding evolution equation with initial data $U(x)$ and Dirichlet boundary conditions. 
 
\par 
Less is known for the  two dimensional case $N=2$. For $f(u)=u^p$ with $p>1$ or $f(u)=e^u$ 
there exist singular solutions to the elliptic equation
\eqref{eq:poli} (see Lions \cite{L} and Tello \cite{T}). Such solutions behave like the fundamental solution $U(x)\sim-\log|x|$ and they are not distributional solutions on the non-punctured ball. Indeed,  it is proved in \cite{DGP} that for  nonlinearities with growth lower or equal to $e^u$, any distributional solution is regular.
On the other hand, for $f(u)\sim e^{u^q}$, with $q>1$,
any singular solution has a singularity weaker than the fundamental solution $-\log|x|$ and it is distributional.
We recall two results for specific nonlinearities $f(u)\sim e^{u^2}$.
In \cite{FR} it was noted that 
\begin{equation}\label{exp} 
u(r) = (-2\log r)^{1/2} \ \hbox{ solves the equation  \eqref{eq:poli} for }\ f(s) = \frac 1{s^3}\ e^{s^2}.
\end{equation}
Using  this solution, Ioku--Ruf--Terraneo in \cite{IRT1} show that there exists $R > 0$ such that the boundary value problem
\begin{equation*}\left\{
\begin{aligned}{}  -\Delta u &= g(u)  &&\hbox{in} \ B_R\setminus\{0\},
\\
 u &= 0   &&\hbox{on} \ \partial B_R,
\end{aligned} \right.
\end{equation*}
has a singular radial and positive solution $U(x)$ with growth $(-2\log|x|)^{1/2}$ near $0$, for the nonlinearity
$$
g(s) = \left\{ \begin{aligned}{} &e^{s^2}{/s^3},\  \ &&s\ge b, \vspace{0.2cm}\\
&as^2,	\  \ &&0 \le s \le b,
\end{aligned}\right. \quad  \text{where}\ a, b\ \hbox{are constants such that }\ g \in C^1\bigl(
[0,\infty) \bigr).
$$
This solution is also a distributional solution 
on the non-punctured ball $B_R$.
Then, they consider the Cauchy problem \eqref{eq:1.1} with nonlinearity $g(u)$ and initial data $u_0(x)=U(x)$  and they prove the existence of two mild solutions.
They also prove that, 
for the initial data 
$u_0=\lambda U$ with $0<\lambda<1$,
%	strictly below the threshold  $U$,  
problem  \eqref{eq:1.1} is well-posed, while  for the initial data 
$u_0=\lambda U$ with $\lambda>1$,
% above this function $U$, 
 there exists no non-negative solution.

Independently,  Ibrahim--Kikuchi--Nakanishi--Wei~\cite{IKNW} consider the  nonlinearity $f_m(u)$, such that  $\lim_{u\to \infty}\frac{f_m(u)}{u(e^{u^2}-1)}=1$ and 
$\lim_{u\to 0}\frac{f_m(u)}{m u}=-1$ with $m\in \R$, $m>0$, and they prove the existence of a singular positive radial solution $U$ to
$$-\Delta u=f_m(u),$$ on $\R^2$, for some particular $m>0$.
Moreover, 
 $U(x)
=
 \big(-2\log|x| - 2\log(-\log|x|)-2\log2\big)^{1/2}
 +o(1)
 $ as $x\to 0$, and $\lim_{|x|\to\infty}U(x)=0$. They also prove  a result of non-uniqueness to problem  \eqref{eq:1.1} with nonlinearity $f_m$ and initial data $u_0(x)=U(x)$ in $\R^2$.
 
 Recently, in \cite{FIRT1} we observed (see also \cite%[Lemma~5.2]
 {GG}) that there exist explicit 
 singular solutions to
 \begin{equation}\label{G} -\Delta u = g(u)
 \end{equation} 
 for certain specific nonlinearities
 with growth $g(s) \sim e^{s^q}, q > 1$
or 
$g(s)\sim e^{e^s}$.
 Indeed, for 
 $$ g(s) = \frac 4{q\,q'}\frac{e^{s^q}}{s^{2q-1}}\ , \ q > 1 \ , \ 
 \hbox{ 
the functions $v(x) = (-2\log|x|)^{1/q}$ solve
 \eqref{G}
 near the origin,
 }
 $$
 while for 
 $$ g(s) = 4\, \frac {e^{e^s}}{e^{2s}} \ , \ 
 \hbox{
  the function $v(x) = \log(-2\log|x|)$ solves \eqref{G} near the origin.
% the solution to \eqref{G} is} \quad .
}
 $$
 Starting from these explicit singular solutions, in the  recent paper \cite{FIRT1} we were also able to construct a singular solution to \eqref{eq:poli}
  for  a wide class of nonlinearities  of exponential type. In order to classify the nonlinearities we deal with, let us consider functions $f \in C^1([0,\infty))$ such that $f(s)>0$
%   ($\forall s>0$) 
   and $f'(s)>0$ 
%   ($\forall s>0$)
for any $s>0$, and assume that $f$ satisfies the following assumption:
\begin{itemize}
	\item[($f1$)]
	\label{f1}	
\qquad 	
%	\[
$\displaystyle	
F(s):=\int_s^{\infty}\frac{1}{f(\tau)}d\tau<\infty, \ \ \ \ \ s>0.
$
%	\]
\end{itemize}
Furthermore, we introduce
%\qquad 
\[
\displaystyle \frac 1{B_1[f](s)} := (-\log F(s))[1-f'(s)F(s)]
\] 
%\par \smallskip\noindent
and its de l'Hospital form 
%\qquad 
\[
 \displaystyle \frac 1{B_2[F](s)} := \frac{\big(1-f'(s)F(s)\big)'}{\big(\frac 1{-\log F(s)}\big)'}
 \]
and assume that
\par \medskip
\begin{itemize}
	\item[($f2$)]
\label{f2}
\qquad 	
the limits \
	$\displaystyle \lim_{s\to \infty}f'(s)F(s)$\ \
	and \
	$\displaystyle \lim_{s\to \infty}B_2[f](s)$\ \
	exist, and satisfy
	\begin{equation}
	\label{eq:1.2cc}
	\displaystyle \lim_{s\to \infty}f'(s)F(s)=1
	\end{equation}
	and
	\begin{equation}
	\label{eq:1.2c}
	\displaystyle B:=\lim_{s\to \infty}B_2[f](s)\in [1,\infty).
	\end{equation}
\end{itemize}
We also  need  a further hypothesis which 
enables us to construct a singular solution.
For $f$ satisfying $(f1)$ and $(f2)$, and with $B$ as in \eqref{eq:1.2c}, let $B'$ be its dual exponent, and choose the associated ``model nonlinearity'' $g$ as follows:  
\begin{equation}\label{g}
g(s):= \hbox{$\frac{4}{BB'}s^{1-2B'}$}\, e^{s^{B'}} \  \hbox{if } B > 1, \quad g(s):=4\,\frac{e^{e^s}}{e^{2s}} 
\ \ \hbox{if } B = 1,
\end{equation}
and, respectively, 
\begin{equation}\label{V}
v(x)=(-2\log|x|)^\frac1{B'} \ \ \hbox{if } B > 1,\ \ v(x)=\log(-2\log|x|)\ \ \hbox{if } B = 1.
\end{equation}
Define 
\[
G(s):=\int_s^{\infty}\frac{1}{g(\tau)}d\tau
\]
for $s>0$. It is remarkable that 
$G(v)$ can be expressed in one form for each cases $B> 1$ and $B=1$
by
\[
G\bigl(v\bigr)=\frac{B}{4}|x|^2\left(1+\log\frac{1}{|x|^2}\right).
\]
Setting
\begin{equation}\label{eq:1.5}
\begin{aligned}
R_1(x)
:=
\Big|\frac{1}{B_1[f]\big(\tilde u(x)\big)}-\frac{1}{B_1[g]\bigl(v(x)\bigr)}\Big|\ ,\quad 
R_2(x)
:=
\Big|\frac{1}{B_2[f]\bigl(\tilde u(x)\bigr)}-\frac{1}{B_2[g]\bigl(v(x)\bigr)}\Big|\ ,
\end{aligned}
\end{equation}
where \begin{equation}\label{eq:u}
\tilde u(x):=F^{-1}\Bigl[G[v(x)]\Bigr], \ 
\end{equation}
we make the assumption
\par \smallskip \noindent
\begin{itemize}
	\item[($f3$)]
\label{f3}\qquad 
$\displaystyle \lim_{x \to 0}(-\log |x|)^{\frac{1}{2}}\left[R_1(x)+R_2(x)\right]=0$.
\end{itemize}
\par \bigskip \noindent
Then, one has the following result.

\begin{theorem}[{\cite[Theorem~2.2]{FIRT1}}]\label{theorem:1.1}
	Let	$f\in 
	C^1\bigl([0,\infty)\bigr)$,
	$f(s)>0$, $f'(s)>0$ for all $s>0$. Assume that  $f$ satisfies
	(f1), (f2) and (f3) with respect to 
	the model nonlinearity $g$.
	Then, there exist $R>0$ and 
	$U\in 
	C^{2}(B_R\setminus\{0\})
	$ 
	satisfying

\begin{equation}\label{eq:1.1aaa}
	\left\{
	\begin{aligned}
	-\Delta u
	&
	=f(u)
	&& \text{in}\ B_R\setminus\{0\},
	\\
	u
	&
	=0
	&& \text{on}\ \partial B_R,
	\end{aligned}
	\right.
	\end{equation}
	in the classical sense in $B_R\setminus\{0\}$.
	Furthermore,
	\begin{equation*}
\lim_{x\to 0}	U(x)
	=\infty
	\end{equation*}
	and $U$
	 satisfies
	$-\Delta U=f(U)$ in $B_R$
	in the sense of distributions.
\end{theorem}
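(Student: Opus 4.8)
The plan is to look for $U$ radial and to realise it as a lower-order perturbation of the explicit profile $\tilde u = F^{-1}[G(v)]$ from \eqref{eq:u}, for which $F(\tilde u)=G(v)$ holds by construction. Writing $r=|x|$, a radial $U$ solves $-\Delta u=f(u)$ if and only if $U''+\frac1r U'+f(U)=0$, and the profile $v$ is tailored to be the exact radial solution of the model equation $-\Delta v=g(v)$. First I would compute the residual of the ansatz. Differentiating $F(\tilde u)=G(v)$ gives $\nabla\tilde u=\frac{f(\tilde u)}{g(v)}\nabla v$, and computing $\Delta\bigl(F(\tilde u)\bigr)=\Delta\bigl(G(v)\bigr)$ from $F'=-1/f$, $G'=-1/g$ and $-\Delta v=g(v)$ yields, after substitution,
\[
-\Delta\tilde u-f(\tilde u)=f(\tilde u)\,\frac{|\nabla v|^2}{g(v)^2}\,\bigl(g'(v)-f'(\tilde u)\bigr).
\]
The definition of $B_1$ then converts the last factor into exactly the quantity appearing in \eqref{eq:1.5}: since $F(\tilde u)=G(v)$,
\[
g'(v)-f'(\tilde u)=\frac{1}{G(v)\bigl(-\log G(v)\bigr)}\Bigl(\tfrac{1}{B_1[f](\tilde u)}-\tfrac{1}{B_1[g](v)}\Bigr),
\]
whose modulus is $R_1(x)/\bigl[G(v)(-\log G(v))\bigr]$. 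This is the structural reason $R_1$ (and, through the de l'Hospital form $B_2$ needed to control the indeterminate limit of $B_1$, also $R_2$) measures the defect of $\tilde u$.

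Next I would upgrade $\tilde u$ to a genuine solution near the origin. Setting $U=\tilde u+w$ and using the residual above, $w$ solves a linear problem with forcing equal to the residual plus the quadratic remainder $f(\tilde u+w)-f(\tilde u)-f'(\tilde u)w$. I would invert the linearised operator $-\Delta-f'(\tilde u)$ (equivalently the associated second-order ODE in the variable $t=-\log r$, via its Green's function) in a weighted space adapted to the blow-up of $\tilde u$, the weight being precisely the one for which the $(-\log|x|)^{1/2}$-normalised residual governs the image; hypothesis (f3) then makes the forcing small and a contraction argument produces a small correction $w$, hence a classical solution $U=\tilde u+w$ on a punctured ball $B_\rho\setminus\{0\}$ with $U\sim\tilde u$. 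Because $F(\tilde u)=G(v)\to0$ as $x\to0$ while $F$ is decreasing to $0$, this forces $\tilde u\to\infty$, and since $w$ is of lower order, $\lim_{x\to0}U(x)=\infty$. Finally I would continue $U$ outward by the ODE: since $(rU_r)'=-rf(U)<0$, the quantity $rU_r$ is decreasing and $U$ is strictly decreasing, so $U$ reaches a first zero at some finite radius, which I take to be $R$; this yields $U\in C^2(B_R\setminus\{0\})$, positive, solving \eqref{eq:1.1aaa} with $U=0$ on $\partial B_R$.

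It remains to check that $U$ solves $-\Delta U=f(U)$ in $\mathcal D'(B_R)$, i.e.\ that no distributional mass concentrates at $0$. Both $U$ and $f(U)$ lie in $L^1_{\mathrm{loc}}(B_R)$: $U$ grows like $(-2\log|x|)^{1/B'}$ (or like $\log(-2\log|x|)$ when $B=1$), strictly slower than the fundamental solution $-\log|x|$, and integrating the equation over annuli bounds $f(U)$ in $L^1$. For $\phi\in C_c^\infty(B_R)$, Green's identity on $B_R\setminus B_\epsilon$ gives, besides the vanishing bulk term, only the inner boundary contribution $\oint_{|x|=\epsilon}\bigl(U\,\partial_\nu\phi-\phi\,\partial_\nu U\bigr)\,dS$. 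The first piece is $O\bigl(\epsilon(-\log\epsilon)^{1/B'}\bigr)\to0$; the decisive point is that the profile gives $rU_r\sim-\tfrac{2}{B'}(-2\log r)^{1/B'-1}\to0$ (respectively $rU_r\sim 1/\log r\to0$ when $B=1$), so the second piece is $\approx2\pi\,\phi(0)\,(rU_r)|_{r=\epsilon}\to0$. Hence the boundary term vanishes and $U$ is a distributional solution on the whole ball; this is exactly where the sub-logarithmic growth, guaranteed by $B'>1$ (or the double-logarithmic growth for $B=1$), is used, in contrast with the strictly logarithmic singularities that do carry a Dirac mass.

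The main obstacle is the inversion of the linearised operator in a norm that is simultaneously strong enough to close the contraction and weak enough to tolerate the blow-up of $\tilde u$ at the origin: obtaining the weighted a priori estimates for the linearised problem uniformly up to $r=0$, and verifying that the nonlinear remainder is a genuine contraction there, is the technical heart of the argument. The exponent $\tfrac12$ in (f3) is tuned exactly so that the weighted residual lands in the range of this inverse. Everything else --- the radial reduction, the outward continuation fixing $R$, and the removability of the singularity --- is comparatively routine once these estimates are in hand.
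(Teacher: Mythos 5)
Your proposal is correct and follows essentially the same route as the paper's (sketched) proof of this theorem: the ansatz $\tilde u=F^{-1}\bigl[G(v)\bigr]$ built from the model nonlinearity $g$, the residual computation (your formula is exactly the paper's transformed equation \eqref{eq:transformation} rewritten using $F(\tilde u)=G(v)$ and $\nabla \tilde u=\frac{f(\tilde u)}{g(v)}\nabla v$, and your $B_1$-identity is how $(f3)$ enters), smallness of the defect via $(f1)$--$(f3)$, and a contraction argument yielding $U$ close to $\tilde u$, with the outward continuation and the removable-singularity argument (sub-logarithmic growth plus $r\,\partial_r U\to 0$) completing the statement. Since this matches the strategy the paper attributes to \cite{FIRT1}, no further comparison is needed.
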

The key tool of the proof of Theorem \ref{theorem:1.1} is the following. Let $f$ be a function satisfying the hypothesis of the Theorem \ref{theorem:1.1}. The idea is to connect $f$ to the explicit nonlinearity $g$ as \eqref{g},  which is such that
$-\Delta v=g(v)$ admits the explicit solution $v(x)$ given by \eqref{V}.
Then, thanks to the result  by Fujishima--Ioku \cite{FI}, one has that for general $f$ and $g$
 \[
v(x) \ \hbox{ is a solution to }\ -\Delta v=g(v)
\] 
if and only if
\begin{equation*}
\tilde u(x):=F^{-1}\Bigl[G[v(x)]\Bigr] \ 
\end{equation*}
satisfies
\begin{equation}\label{eq:transformation}
-\Delta \tilde u=f(\tilde u)
+\frac{|\nabla{\tilde u}|^2}{f(\tilde u)F(\tilde u)}\Big[ g'(v)G(v)-f'(\tilde u)F(\tilde u)\Big].
\end{equation}
The assumptions 
$(f1)$--$(f3)$
serve to show that the second term on the right
hand side
in \eqref{eq:transformation} is suitably small. Then, by a fixed point argument, one obtains a solution $U$ to $-\Delta u = f(u)$ which is close to $\tilde u$.
Therefore, thanks to 
the transformation~\eqref{eq:u} and
the transformed equation~\eqref{eq:transformation},
one can connect a general nonlinearity $f$
to a {\it model nonlinearity} $g$ and obtain an approximate solution $\tilde u$ which then yields a singular solution $U$ of $-\Delta u=f(u)$ close to  $\tilde u$.
\par \medskip
It was shown in \cite{FIRT1} that, despite the somewhat involved hypothesis $(f3)$, wide classes of nonlinearities are covered by this theorem, e.g.
$$ f(s) = s^r\, e^{s^q} \ (q > 1\ , r \in \mathbb R)\ ;\quad f(s) = e^{s^q + s^r}\ \ (q > 1,\ 0<r<\frac q2\ \ {\rm or}\ \  1<q<4,\ r=q-1)  \ ; \quad f(s) = e^{e^{s}}.  \  
$$
\par \medskip
\begin{remark}
Since our argument in \cite{FIRT1} is a direct construction by contraction map argument, 
the explicit asymptotic behavior of the singular solution in Theorem~\ref{theorem:1.1} 
was also obtained as follows:
\begin{equation}\label{eq:1.2e}
U(x)
=
\tilde u(x)
+
O\Big(f\bigl(\tilde u(x)\bigr)F\bigl(\tilde u(x)\bigr)\sup_{|y|\le |x|}
\bigl(R_1(y)+R_2(y)\bigr)\Big)
\qquad \text{as}\  |x|\to 0,
\end{equation}
where $	\tilde u(x)=F^{-1}\Bigl[G[v(x)]\Bigr]$ and $v(x)$ is as 
\eqref{V}.
We should mention that Kumagai~\cite{K} recently proved existence of a singular solution with very wide class of nonlinearities 
$f(u)$ in $N=2$ by an indirect proof.  However,  his argument 
does not derive the asymptotic behavior of the solution.

\end{remark}
Furthermore, 
the singular solution $U$ satisfies the following:
\begin{proposition}
\label{est}
	For any $\sigma >0$, there exists $r_\sigma$, such that
	\begin{equation}\label{eq:f}
	\frac{C}{|x|^2 \left(1-2\log|x|\right)^{1+\frac{1}{B}+\sigma}}
	\le 
	f\bigl(U(x)\bigr)\le
	\frac{C}{|x|^2 \left(1-2\log|x|\right)^{1+\frac{1}{B}-\sigma}},
	\end{equation}
	\begin{equation}\label{Stima1}
	0\leq  U(x)\leq (-2\log|x|)^{1-\frac 1B+\sigma},
	\end{equation}
	and
	\begin{equation}\label{Stima2}
	|\nabla U(x)|\leq \frac{1}{|x|(-2\log|x|)^{\frac 1B-\sigma}}
	\end{equation}
	for all $x$ with
	$|x|\leq r_\sigma$. 
Furthermore, it holds that
\begin{equation}\label{eq:ioku1.17}
F(U)=F(\tilde u)\bigl(1+o(1)\bigr)=\frac{4}{B|x|^2(1-2\log |x|)}\bigl(1+o(1)\bigr)
\ \text{as}\ |x|\to 0.
\end{equation}
\end{proposition}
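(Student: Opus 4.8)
\noindent\emph{Plan of proof.} The strategy is to deduce every bound from the explicit approximate solution $\tilde u(x)=F^{-1}[G(v(x))]$, using the pointwise expansion \eqref{eq:1.2e} to pass from $\tilde u$ to $U$, and using the identity $F(\tilde u)=G(v)=\frac{B}{4}|x|^2(1-2\log|x|)$ recorded above to make the asymptotics of $\tilde u$ completely explicit. The whole analysis rests on three consequences of $(f2)$. Writing $L(s):=-\log F(s)$ and $P(s):=f(s)F(s)$ and using $F'=-1/f$, one has $P'=f'F-1$ and $L'=1/P$; together with \eqref{eq:1.2cc}, \eqref{eq:1.2c} and the identification of $B$ with $\lim_{s\to\infty}B_1[f](s)$ (which follows from its de l'Hospital form $B_2[f]$), this gives $1-f'(s)F(s)\sim (B\,L(s))^{-1}$, i.e.\ the asymptotic relation $L''/(L')^2=(1+o(1))/(BL)$. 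Integrating this once (viewing $L'$ as a function of $L$) yields $L'(s)=L(s)^{1/B+o(1)}$, and a second integration yields, for $B>1$,
\[
-\log F(s)=s^{B'+o(1)},\qquad f(s)F(s)=s^{1-B'+o(1)}\qquad (s\to\infty),
\]
the leading powers being exactly those of the model nonlinearity $g$; for $B=1$ one finds instead $-\log F(s)=e^{s(1+o(1))}$, powers being replaced by exponentials.

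With these relations at hand, the estimates for $\tilde u$ are immediate. Since $F(\tilde u)=G(v)$ and $-\log G(v)=-2\log|x|+O\bigl(\log(-\log|x|)\bigr)=(-2\log|x|)(1+o(1))$, the first relation above gives $\tilde u(x)^{B'+o(1)}=(-2\log|x|)(1+o(1))$, hence $\tilde u(x)=(-2\log|x|)^{1/B'+o(1)}=(-2\log|x|)^{1-\frac1B+o(1)}$; likewise $f(\tilde u)=P(\tilde u)/F(\tilde u)=(-2\log|x|)^{-1/B+o(1)}\big/\bigl(\tfrac{B}{4}|x|^2(1-2\log|x|)\bigr)$, which is $|x|^{-2}(1-2\log|x|)^{-1-\frac1B+o(1)}$. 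Because the exponents carry a genuine $o(1)$ (coming from the inversion of $F$, whose multiplicative constant is not pinned down by $(f2)$), these are precisely \eqref{Stima1} and \eqref{eq:f} for $\tilde u$, the freely chosen $\sigma>0$ (together with the constant $C$) absorbing the $o(1)$ once $|x|\le r_\sigma$; the case $B=1$ is parallel, with $\tilde u=\log(-2\log|x|)(1+o(1))$.

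To replace $\tilde u$ by $U$ I would use \eqref{eq:1.2e}, which gives $U-\tilde u=O\bigl(f(\tilde u)F(\tilde u)\,\eta(x)\bigr)$ with $\eta(x):=\sup_{|y|\le|x|}(R_1(y)+R_2(y))\to0$ by $(f3)$. For $F$ this is routine: by the mean value theorem $F(U)-F(\tilde u)=-f(\xi)^{-1}(U-\tilde u)$ for some $\xi$ between the two, and since $|U-\tilde u|/\tilde u\to0$ one has $f(\xi)=f(\tilde u)(1+o(1))$, whence $F(U)=F(\tilde u)(1+o(1))$; combined with $F(\tilde u)=G(v)=\frac{B}{4}|x|^2(1-2\log|x|)$ this gives \eqref{eq:ioku1.17}. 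The delicate transfer is for $f$ itself, and this is the main obstacle: since $f$ grows like $e^{s^{B'}}$, the absolute smallness of $U-\tilde u$ does not by itself control the ratio $f(U)/f(\tilde u)$. The remedy is to write $\log f(U)-\log f(\tilde u)=\int_{\tilde u}^{U}\frac{f'}{f}\,ds$ and to note that $\frac{f'}{f}=\frac{f'F}{fF}\sim\frac{1}{f(\tilde u)F(\tilde u)}$ is essentially constant on the short interval of integration, whose length is $O(f(\tilde u)F(\tilde u)\,\eta)$; the two factors cancel, leaving $\log f(U)-\log f(\tilde u)=O(\eta)\to0$, so that $f(U)=f(\tilde u)(1+o(1))$. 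This promotes \eqref{Stima1} (whose lower bound $U\ge0$ is just the positivity from Theorem~\ref{theorem:1.1}) and \eqref{eq:f} from $\tilde u$ to $U$, and is exactly where $(f3)$ enters.

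Finally, the gradient bound \eqref{Stima2} is obtained directly from the equation rather than from $\tilde u$. Since $U$ is radial and solves $-\Delta U=f(U)$ classically on $B_R\setminus\{0\}$, it satisfies $(rU')'=-r f(U)$ with $r=|x|$; because $U$ is, by Theorem~\ref{theorem:1.1}, a distributional solution on the whole ball $B_R$, there is no concentration at the origin, i.e.\ $\lim_{r\to0^+}rU'(r)=0$. Integrating from $0$ gives $rU'(r)=-\int_0^r s f(U(s))\,ds$, so that, using the upper bound in \eqref{eq:f},
\[
|\nabla U(x)|=\frac1r\int_0^r s f(U(s))\,ds\le \frac{C}{r}\int_0^r\frac{ds}{s\,(-2\log s)^{1+\frac1B-\sigma}}=\frac{C'}{r\,(-2\log r)^{\frac1B-\sigma}},
\]
where the integral converges at $s=0$ precisely because $\tfrac1B-\sigma>0$; absorbing $C'$ into a slightly larger $\sigma$ gives \eqref{Stima2}. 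The only genuinely subtle point in the whole argument is the transfer of $f$ from $\tilde u$ to $U$ described above; everything else is asymptotic bookkeeping built on the identity $F(\tilde u)=G(v)$ and the integrated form of $(f2)$.
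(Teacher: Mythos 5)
Your proposal is correct in its essentials, but it takes a genuinely more self-contained route than the paper. The paper's proof of Proposition~\ref{est} is almost entirely a citation: the bounds \eqref{eq:f}, \eqref{Stima1}, \eqref{Stima2} are quoted from Lemmas~3.5 and~3.6 of \cite{FIRT1}, and only \eqref{eq:ioku1.17} is proved in-house --- by exactly the argument you give (Taylor's theorem for $F$, the expansion \eqref{eq:1.2e}, and the fact $f(\tilde u+c\eta)/f(\tilde u)\to 1$, which the paper in turn cites from (3.19)--(3.20) of \cite{FIRT1}). What you do differently is reconstruct everything from the stated hypotheses: integrating $1-f'F\sim\bigl(B(-\log F)\bigr)^{-1}$ (obtained from $(f2)$ by l'Hospital) twice to get $-\log F(s)=s^{B'+o(1)}$ and $f(s)F(s)=s^{1-B'+o(1)}$, converting these into asymptotics for $\tilde u$ via $F(\tilde u)=G(v)$, transferring to $U$ by the log-derivative cancellation, and deriving \eqref{Stima2} from the radial ODE $(rU')'=-rf(U)$ together with the absence of a Dirac mass at the origin (which is what ``distributional solution'' buys). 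Your approach yields a proof readable without \cite{FIRT1} and a particularly clean ODE derivation of the gradient bound; the paper's citation buys brevity. Two points should be shored up: (i) in the $F$-transfer, ``$|U-\tilde u|/\tilde u\to 0$'' is not by itself a reason that $f(\xi)=f(\tilde u)(1+o(1))$ --- the correct reason is the log-derivative argument you give afterwards, which should come first and cover both transfers; relatedly, the claim that $1/(fF)$ is ``essentially constant'' on the interval of integration needs a short bootstrap (e.g.\ from $I\le C\eta\,e^{\epsilon I}$ with $\epsilon=\sup(1-f'F)$ small, conclude $I=O(\eta)$), since the slow variation of $fF$ is exactly what is being used. (ii) Your conclusion $F(U)=\frac{B}{4}|x|^2(1-2\log|x|)\bigl(1+o(1)\bigr)$ is the correct one: the fraction displayed in \eqref{eq:ioku1.17} and at the end of the paper's proof is inverted (a typo), as one sees from $G(v)=\frac{B}{4}|x|^2\left(1+\log\frac{1}{|x|^2}\right)$ and from the use of \eqref{eq:ioku1.17} in Section~4 to compute $w_0=F(U)^{-1/2}$.
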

\begin{proof}
Since \eqref{eq:f}--\eqref{Stima2} is already proved in 
\cite[Lemmas 3.5 and 3.6]{FIRT1}, we only prove \eqref{eq:ioku1.17}.
Let $\eta$ be the remainder term given by $U=\tilde u +\eta$. 
Then, Taylor's theorem yields that
\[
F(U)
=
F(\tilde u)-\frac{\eta}{f\bigl(\tilde u+c \eta\bigr)}
=
F(\tilde u)\left(1-\frac{\eta}{F(\tilde u)f\bigl(\tilde u+c \eta\bigr)}\right)
\]
for some $c\in (0,1)$. 
Meanwhile, it holds by \cite[(3.19) and (3.20)]{FIRT1} that
\[
\lim_{x\to 0}\frac{f\bigl(\tilde u\bigr)}{f\bigl(\tilde u+c\eta\bigr)}=1.
\]
Therefore, by \eqref{eq:1.2e}, we have
\[
\left|
\frac{\eta}{F(\tilde u)f\bigl(\tilde u+c \eta\bigr)}
\right|
\le 
C
\frac{|\eta|}{F(\tilde u)f\bigl(\tilde u\bigr)}
\le
\tilde C 
\sup_{|y|\le |x|}
\bigl(R_1(y)+R_2(y)\bigr)
\]
for some $C,\tilde C>0$. 
The conclusion holds since
the most right hand side of the above inequality converges to 0 as $x\to 0$
and 
$F(\tilde u)=G(v)=\frac{4}{B|x|^2(1-2\log |x|)}$.
\end{proof}

\begin{remark}\label{RM12}
	As a consequence of Proposition~\ref{est}, we have $U\in L^p$ for all $p\in [1,\infty)$ and
	$f(U)\in L^1(B_R)$ and $f(U)\notin L^p(B_R)$
	for any $p\in (1,\infty]$.
	Furthermore,
	if $1<B<2$ then there exists small $r_0>0$ such that the singular solution 
	$U$ belongs to the energy class $H^1(B_{r_0})$.
\end{remark}

\section{Main results}
In the present paper we will prove that the Cauchy problem \eqref{eq:1.1} with $U$, given by Theorem \ref{theorem:1.1}, as initial data has {\it two mild solutions}. 
\par \smallskip
In order to present the result, let us start by introducing the functional framework and the definition of mild solutions.
Let $Y$ be the Fr\'echet space defined by
\[
\displaystyle Y:=\bigcap_{1\le p <\infty}L^p(B_{R})
\]
endowed with the metric
\[
\text{dist}(u,v) 
=
\sum_{k=1}^{\infty}2^{-k}
\frac{\|u-v\|_{L^k}}{1+\|u-v\|_{L^k}}.
\]
It is known that $u(t)\to u_0$ in $Y$ if and only if $u(t)\to u_0$ in $L^p$ for all $1\le p <\infty$. 
 We introduce the definition of  local in time 
mild solution  to  problem  \eqref{eq:1.1}.
For the sake of simplicity, in the following we will omit the underlying space $B_R$.

\begin{definition}[mild solution]
For $u_0\in Y$, we say $u(t)$ is a mild solution  ($Y$-mild solution) of problem~\eqref{eq:1.1}
if 
$u\in C([0,T);Y)$
and
\begin{equation}\label{eq:6.1}
u(t)={\rm e}^{t\Delta}u_0+\int_0^t {\rm e}^{(t-s)\Delta}f\bigl(u(s)\bigr)ds\quad \text{in} \ L^p(B_R)\ \text{for\ all\ }1\le p <\infty.
\end{equation}
\end{definition}

\par \medskip

First, we   construct 
a mild solution  $u_r(t,x)$, that is also {\it regular}, namely bounded for positive time,
to 
 problem \eqref{eq:1.1}
with the initial data $U$, where $U$ is the singular radial and positive solution  found in Theorem \ref{theorem:1.1}. 
\begin{proposition}\label{extcl} Assume that	$f\in 
	C^1\bigl([0,\infty)\bigr)$,
	$f(s)>0$, $f'(s)>0$ for all $s>0$ and  that  $f$ satisfies
	(f1), (f2) and (f3) with respect to 
	the model nonlinearity $g$. Let $U$ be the singular solution  to equation \eqref{eq:1.1aaa} constructed in Theorem~\ref{theorem:1.1}.
	Then, there exists some positive time $T=T(U)>0$ and a function $u\in{C}([0,T);Y)\cap L^{\infty}_{loc}(0,T;L^\infty)$ which is a mild  solution to problem \eqref{eq:1.1}	with initial data 
	$u_0=U$.
\end{proposition}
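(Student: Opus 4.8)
The plan is to obtain $u_r$ as the \emph{minimal} mild solution generated by a monotone iteration, using the singular stationary solution $U$ itself as a supersolution, and then to prove separately that this minimal solution is bounded for positive time. First I would set $w_0\equiv 0$ and iterate
\[ w_{n+1}(t)=e^{t\Delta}U+\int_0^t e^{(t-s)\Delta}f\bigl(w_n(s)\bigr)\,ds. \]
Since $f\ge 0$ is nondecreasing and the Dirichlet heat semigroup is order preserving, $(w_n)$ is nondecreasing in $n$. The key remark is that $U$ is itself a mild solution: as $-\Delta U=f(U)$ in $\mathcal D'(B_R)$ (Theorem~\ref{theorem:1.1}) with $U=0$ on $\partial B_R$ and $f(U)\in L^1$ (Remark~\ref{RM12}), Duhamel's formula gives $U=e^{t\Delta}U+\int_0^t e^{(t-s)\Delta}f(U)\,ds$. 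By induction $0\le w_n\le U$, so each integral term is dominated by $U-e^{t\Delta}U\in Y$ and the monotone limit $u:=\lim_n w_n$ exists, satisfies $0\le u\le U$, and solves \eqref{eq:6.1}. Continuity $u\in C([0,T);Y)$ and $u(t)\to U$ in $Y$ as $t\to 0$ follow because $e^{t\Delta}U\to U$ in every $L^p$ ($U\in L^p$ by Remark~\ref{RM12}), while $0\le\int_0^t e^{(t-s)\Delta}f(u)\,ds\le U-e^{t\Delta}U\to 0$ in $Y$.

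It then remains to prove $u\in L^\infty_{loc}(0,T;L^\infty)$. For fixed $t>0$ I would split $\int_0^t=\int_0^{t/2}+\int_{t/2}^t$. On $(0,t/2)$ one has $t-s\ge t/2$ and $f(u(s))\le f(U)\in L^1$, so the $L^1$--$L^\infty$ smoothing $\|e^{(t-s)\Delta}\|_{L^1\to L^\infty}\le C(t-s)^{-1}$ yields a bound by $C\|f(U)\|_1$; together with $\|e^{t\Delta}U\|_\infty\le Ct^{-1/p}\|U\|_p$ this reduces boundedness to controlling $\int_{t/2}^t e^{(t-s)\Delta}f(u(s))\,ds$. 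For the latter the $L^q$--$L^\infty$ estimate $\|e^{\tau\Delta}\|_{L^q\to L^\infty}\le C\tau^{-1/q}$, which is integrable near $\tau=0$ for every $q>1$, is enough \emph{provided} $\|f(u(s))\|_q$ is locally bounded on $(0,T)$ for some $q>1$. Since $f(U)\notin L^q$ for $q>1$ (Remark~\ref{RM12}), this forces me to show that $u(s)$ lies strictly below $U$ near the origin for $s>0$.

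Here I would invoke the transformation underlying Theorem~\ref{theorem:1.1}. Writing $w=F(u)$, a direct computation turns the equation \eqref{eq:1.1} into
\[ w_t=\Delta w-f'(u)\,|\nabla w|^2-1, \]
so $w$ is a subsolution of $w_t-\Delta w=-1$. As $F$ is decreasing, an $L^\infty$ bound for $u(t)$ is equivalent to a positive lower bound for $w(t)=F(u(t))$. If the gradient term $f'(u)|\nabla w|^2=\tfrac{f'(u)}{f(u)^2}|\nabla u|^2$ can be bounded by some $C_0$ on $[t_0,T)$, then $w$ becomes a supersolution of $w_t-\Delta w=-(C_0+1)$ and comparison gives $F(u(t))\ge e^{t\Delta}F(U)-(C_0+1)t$. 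By \eqref{eq:ioku1.17} one has $F(U)\sim\tfrac{B}{4}|x|^2(1-2\log|x|)$, so $e^{t\Delta}F(U)$ is, near the origin, of order $t\log\tfrac1t$, which dominates the linear loss $(C_0+1)t$; this would give $F(u(t))\ge c(t)>0$, hence the required $L^\infty$ bound, and then the pointwise control \eqref{eq:f} upgrades it to $f(u(s))\in L^q$.

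The main obstacle is precisely this gradient bound. At $t=0$ it fails: by \eqref{Stima2} and \eqref{eq:f} one finds $\tfrac{f'(U)}{f(U)^2}|\nabla U|^2\sim\log\tfrac1{|x|}\to\infty$, so the estimate can hold only for $t>0$ and must be produced by parabolic smoothing once the solution is known to be classical for positive time. Closing this loop --- boundedness for $t>0$ being essentially equivalent to the gradient bound --- is the heart of the argument; I expect to handle it by a bootstrap that starts from the $L^1$-controlled short-time behaviour near $t=0$ and propagates regularity forward, the Dirichlet boundary playing no role since $u$ is trivially bounded away from the origin.
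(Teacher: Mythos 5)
Your monotone-iteration skeleton is sound as far as it goes, and in fact it coincides with the paper's own iteration (Proposition \ref{Perron} iterates $U^{(n+1)}={\rm e}^{t\Delta}U+\int_0^t{\rm e}^{(t-s)\Delta}f(U^{(n)})\,ds$, which has the same monotone limit as your sequence started from $0$). But there are two gaps, one secondary and one fatal. The secondary one: your assertion that ``Duhamel's formula gives'' the integral identity $U={\rm e}^{t\Delta}U+\int_0^t {\rm e}^{(t-s)\Delta}f(U)\,ds$ is not free; $U$ is only a distributional solution with a genuine singularity, and this identity is precisely what the paper proves in Section 5 by integration by parts, with the boundary terms on $\partial B_\delta$ killed by the estimates \eqref{Stima1}--\eqref{Stima2} of Proposition \ref{est}. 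The fatal one is exactly where you place it: the proof of $u\in L^{\infty}_{loc}(0,T;L^{\infty})$ is never closed, and the bootstrap you sketch cannot close it. The circularity is structural, not technical: the gradient bound $f'(u)|\nabla F(u)|^2\le C_0$ needs classical regularity for $t>0$, regularity needs boundedness, and boundedness was to come from the gradient bound. Worse, no argument of this soft comparison type can succeed, because the stationary solution $u_s(t)=U$ is itself a mild solution satisfying every hypothesis your scheme uses: it lies below $U$, it solves the very same transformed equation $w_t=\Delta w-f'(u)|\nabla w|^2-1$ with the very same initial data $F(U)$, and $F(U(x))\to 0$ as $x\to 0$, so the positive lower bound on $w$ that you are trying to propagate is false for this competitor. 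Any proof of boundedness must therefore use an input that distinguishes the constructed solution from $U$; your proposal never produces one.

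The paper supplies exactly this missing input by building, \emph{before} any iteration, a supersolution that is bounded for positive time. It sets $w_0=F(U)^{-1/2}$ (suitably cut off at the level $\beta$ where $f'(s)F(s)\le 1$), observes via \eqref{eq:ioku1.17} that $w_0\notin L^2$ but $w_0\in L^{2,q}$ for $q>2$, and solves the auxiliary problem $\partial_t w-\Delta w=\tfrac12 w^3$ by a contraction-mapping argument in Lorentz spaces (Proposition \ref{ex}); the fixed point is classical for $t>0$ with $t^{3/10}\|w(t)\|_{L^5}\to 0$. The transformed function $\bar u=F^{-1}(w^{-2})$ is then a supersolution of \eqref{eq:1.1} with $\bar u(0,\cdot)\ge U$, because the gradient term comes out with the favorable sign $4f(\bar u)w^{-4}|\nabla w|^2\bigl(\tfrac32-f'(\bar u)F(\bar u)\bigr)\ge 0$ --- precisely the sign structure your direct computation with $w=F(u)$ lacks. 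After checking (Lemma \ref{jensen}, Lemma \ref{FI}, Proposition \ref{prop2}) that $\bar u$ also supersolves the \emph{integral} equation, Perron's method below $\bar u$ yields the same minimal mild solution you define, now with the bound $0\le u\le\bar u$ giving $u\in L^{\infty}_{loc}(0,T;L^{\infty})$ for free. In short: without the Cole--Hopf-type change of unknown and the Lorentz-space fixed point producing a bounded supersolution, the proposition is not proved.
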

 Then, we will prove that also the stationary solution $u_s(t,x)=U(x)$, for $t>0$, is  a mild solution.
As a consequence, the non-uniqueness of mild solutions holds as follows.

\begin{theorem}\label{nonun}
Assume that	$f\in 
	C^1\bigl([0,\infty)\bigr)$,
	$f(s)>0$, $f'(s)>0$ for all $s>0$ and  that  $f$ satisfies
	(f1), (f2) and (f3) with respect to 
	the model nonlinearity $g$. 
Let 
$U$
be the singular solution to equation \eqref{eq:1.1aaa} constructed in Theorem~\ref{theorem:1.1}. Then, for some positive time $T=T(U)$, there exist  at least two  mild solutions of problem \eqref{eq:1.1} with initial data 
	$u_0=U$.
\end{theorem}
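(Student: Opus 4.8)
The plan is to combine Proposition~\ref{extcl} with the observation that the stationary function $u_s(t,x)\equiv U(x)$ is itself a mild solution, and then to distinguish the two by their regularity. Proposition~\ref{extcl} already supplies a mild solution $u_r\in C([0,T);Y)\cap L^{\infty}_{loc}(0,T;L^\infty)$ with $u_r(0)=U$ which is bounded for each $t>0$. Since $U$ is singular at the origin with $\lim_{x\to0}U(x)=\infty$, we have $U\notin L^\infty$, so $u_s(t)=U\neq u_r(t)$ for every $t>0$ at which $u_r(t)\in L^\infty$. Hence, once $u_s$ is shown to be a mild solution, the solutions $u_r$ and $u_s$ are distinct and Theorem~\ref{nonun} follows.

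The main step is therefore to verify that $u_s(t)\equiv U$ is a mild solution in the sense of \eqref{eq:6.1}. By Remark~\ref{RM12} we have $U\in Y$ and $f(U)\in L^1(B_R)$, and the map $t\mapsto U$ is trivially in $C([0,T);Y)$; it remains to establish the Duhamel identity
\[
U={\rm e}^{t\Delta}U+\int_0^t{\rm e}^{(t-s)\Delta}f\bigl(U\bigr)\,ds\qquad\text{in }L^p(B_R),\ 1\le p<\infty.
\]
Formally this is immediate: since $-\Delta U=f(U)$, one has $\frac{d}{ds}{\rm e}^{(t-s)\Delta}U={\rm e}^{(t-s)\Delta}f(U)$, and integrating in $s$ over $[0,t]$ yields the identity. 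To make this rigorous I would argue by duality. For $\psi\in C_c^\infty(B_R)$ set $\phi(s):=\int_{B_R}U\,{\rm e}^{s\Delta}\psi\,dx$ and differentiate, using the self-adjointness of the Dirichlet heat semigroup together with the distributional equation $\int_{B_R}U(-\Delta\chi)\,dx=\int_{B_R}f(U)\chi\,dx$ from Theorem~\ref{theorem:1.1}, to obtain $\phi'(s)=-\int_{B_R}f(U)\,{\rm e}^{s\Delta}\psi\,dx$. Integrating from $0$ to $t$ and changing variables gives the weak form of the Duhamel identity against every $\psi\in C_c^\infty(B_R)$, which extends by density to the claimed identity in $L^p$.

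Two points require care. First, the convergence of the Duhamel integral in $L^p$: since $f(U)$ lies only in $L^1(B_R)$ by Remark~\ref{RM12}, I would use the smoothing estimate $\|{\rm e}^{\tau\Delta}f(U)\|_{L^p}\le C\,\tau^{-(1-1/p)}\|f(U)\|_{L^1}$ valid in $\R^2$; the exponent $1-1/p<1$ makes $\int_0^t(t-s)^{-(1-1/p)}\,ds$ finite for every $p<\infty$, so the integral converges in $L^p$ for all finite $p$. Second, and this is the genuine obstacle, the distributional equation in Theorem~\ref{theorem:1.1} is a priori tested only against $\varphi\in C_c^\infty(B_R)$, whereas the duality argument requires testing against ${\rm e}^{s\Delta}\psi$, which is smooth and vanishes on $\partial B_R$ but is not compactly supported. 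Since $U$ is classical and the equation holds pointwise away from the origin, I would justify this either by a cut-off near $\partial B_R$ — where both $U$ and ${\rm e}^{s\Delta}\psi$ vanish, so the boundary terms produced by integration by parts cancel — or by approximating ${\rm e}^{s\Delta}\psi$ by compactly supported functions and passing to the limit using $U\in L^p$ and $f(U)\in L^1$. The only concentration that could obstruct the identity is at the origin, and this is precisely ruled out by the fact, from Theorem~\ref{theorem:1.1}, that $U$ solves $-\Delta U=f(U)$ distributionally on the \emph{non-punctured} ball $B_R$, so that no Dirac mass appears at $0$. With these verifications $u_s$ is a mild solution, and combined with $u_r$ from Proposition~\ref{extcl} the proof is complete.
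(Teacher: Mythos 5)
Your proposal is correct, and its skeleton (take the regular solution from Proposition~\ref{extcl}, show that the stationary function $u_s\equiv U$ is also a mild solution, and distinguish the two since $u_r(t)\in L^\infty$ while $U\notin L^\infty$) coincides with the paper's; but the key technical step --- verifying the Duhamel identity \eqref{eq:6.1} for $U$ --- is carried out by a genuinely different route. The paper argues directly with the Dirichlet heat kernel: it differentiates $\mathrm{e}^{(t-s)\Delta}U$ in $s$, transfers $\Delta_x$ onto $\Delta_y$ by the symmetry of $G$, excises a small ball $B_\delta$ around the singularity, integrates by parts on $B_R\setminus B_\delta$ as in \eqref{eq:6.3}, and kills the boundary terms on $\partial B_\delta$ as $\delta\to0$ using the explicit asymptotics of $U$ and $\nabla U$ from Proposition~\ref{est} (estimates \eqref{Stima1}--\eqref{Stima2}, used in \eqref{eq:6.4}); in effect it re-derives, inside the proof, the fact that no mass concentrates at the origin. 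You instead quote that fact --- the distributional identity $-\Delta U=f(U)$ on the non-punctured ball from Theorem~\ref{theorem:1.1} --- and run a duality argument against $\mathrm{e}^{s\Delta}\psi$, so the origin causes no difficulty at all; the price is exactly the test-function mismatch you single out, and your cutoff fix works provided you add one observation: near $\partial B_R$ the solution $U$ is bounded with $U=0$ on the boundary, so elliptic regularity makes $U$ smooth up to $\partial B_R$, whence $U=O(\epsilon)$ and $\mathrm{e}^{s\Delta}\psi=O(\epsilon)$ on the cutoff annulus and the commutator terms $2\nabla\eta_\epsilon\cdot\nabla(\mathrm{e}^{s\Delta}\psi)+(\mathrm{e}^{s\Delta}\psi)\Delta\eta_\epsilon$ integrate to $O(\epsilon)$ against $U$ (mere integrability of $U$ and $f(U)$, as you suggest, would not beat the $\epsilon^{-2}$ from $\Delta\eta_\epsilon$). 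This implicit boundary regularity is also used, silently, in the paper's integration by parts up to $\partial B_R$, so it is not an extra cost of your method. What each approach buys: the paper's proof is self-contained, needing only the classical equation on the punctured ball plus Proposition~\ref{est}; yours is shorter because it reuses the distributional statement already established in \cite{FIRT1}, and your scalar function $\phi(s)$ together with the fundamental theorem of calculus avoids the paper's $\epsilon$-truncation of the time integral --- though both arguments need the same $L^1\to L^p$ smoothing bound $\|\mathrm{e}^{\tau\Delta}f(U)\|_{L^p}\le C\tau^{-1+1/p}\|f(U)\|_{L^1}$ to make sense of $\int_0^t\mathrm{e}^{(t-s)\Delta}f(U)\,ds$ in $L^p$ for every finite $p$.
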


\begin{remark}\label{Remark:2.1}
	We expect that as in  \cite{IRT1}  the singular solution $U$ plays the role of threshold between existence and nonexistence. More precisely, for the initial data 
$u_0=\lambda U$ with $0<\lambda<1$,
%	strictly below the threshold  $U$,  
problem  \eqref{eq:1.1} is well-posed, while  for the initial data 
$u_0=\lambda U$ with $\lambda>1$,
% above this function $U$, 
 there exists no non-negative solution. See also \cite{FI},  \cite{HM} for similar results. 
	\end{remark}

\par \medskip

 The question of uniqueness of solution for the Cauchy problem of partial differential equations has been widely studied.  Recently, some non-uniqueness results of  weak non-steady  solutions to the  Euler equation have been
proposed by
 De Lellis--Sz\'ekelyhidi \cite{DS}
 by using convex integration techniques. Then, these 
	 results have been  adapted to the framework of Navier--Stokes equations on the torus in three dimensions by Bruckmaster--Vicol \cite{BV} who proved non-uniqueness of weak solutions with finite kinetic energy. We also mention the results of  Luo \cite{Lu} and Lemari\'e-Rieusset \cite{LR} who proved the existence of steady non-trivial  weak solutions  for the Navier--Stokes equations on the torus, respectively, in dimension greater 
than
or equal to  4 and in dimension 2. As a consequence, they obtained  some  non-uniqueness results in a class of weak solutions for Navier--Stokes equations.
	 
\par \medskip	 
The plan  of the  paper is the following.  In Section 3 we describe the behavior of a concave function along the solution of the heat equation on a ball with Dirichlet boundary condition. We also present Perron's monotone method. Both these results  will be essential to prove the existence of a regular mild solution  to the Cauchy problem \eqref{eq:1.1} with initial data $u_0=U$, where $U$ is the singular solution of the associated elliptic equation.
Section 4 is devoted to the proof of the existence of a regular  mild solution (Proposition \ref{extcl}).
Finally, in Section 5 we will prove the non-uniqueness 
stated in
Theorem \ref{nonun}.

\section{Some useful tools}

We recall some  results concerning the solution of the heat
equation on a ball with Dirichlet boundary conditions (see Appendix B in \cite{QS}). Let us denote
by $\textrm{e}^{t\Delta}$ the Dirichlet heat semigroup in an open ball $B_R$ with radius $R>0$. It is known
that for any $\varphi \in L^p(B_R)$, $1\leq p\leq \infty$, the function
$u=\textrm{e}^{t\Delta}\varphi$ solves the heat equation $u_t-\Delta u=0$ in $
(0,\infty)\times B_R$ and $u\in C((0,\infty)\times \overline{B_R})$,
$u=0$ on $ (0,\infty)\times \partial B_R$. Moreover, there exists a
positive
${C}^\infty$
function $G: (0,\infty)\times B_R\times
B_R \to \R$ (the Dirichlet heat kernel) such that
$$
{\rm e}^{t\Delta }\varphi(x)=\int _{B_R}G(t,x,y) \varphi(y)dy,
$$
for any $\varphi \in L^p(B_R)$, $1\leq p\leq \infty$. We prepare
a basic lemma.

\begin{lemma}\label{jensen}
	Let $\varphi: B_R \to [0,\infty)$ be a measurable function and
	$H:[0,\infty) \to \mathbb{R}$ be a concave function  such that
	$H(0)\geq0$. Then
	$$
	{\rm e}^{t\Delta
	}H\left(\varphi\right)\leq
	H\left({\rm e}^{t\Delta }\varphi\right).
	$$
\end{lemma}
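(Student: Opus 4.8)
The plan is to reduce the statement to the classical Jensen inequality for probability measures, carefully accounting for the fact that the Dirichlet heat semigroup is only sub-Markovian. Fix $t>0$ and $x\in B_R$, and set $m(t,x):=\int_{B_R}G(t,x,y)\,dy={\rm e}^{t\Delta}1(x)$. I first observe that $0<m(t,x)\le 1$: positivity follows from the strict positivity of the kernel $G$ on $(0,\infty)\times B_R\times B_R$ recalled above, while the upper bound follows from the comparison principle, since the constant function $1$ solves the heat equation and dominates ${\rm e}^{t\Delta}1$ on the parabolic boundary (mass is lost through $\partial B_R$ under the Dirichlet condition).

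Since $m:=m(t,x)>0$, the measure $d\mu:=m^{-1}G(t,x,y)\,dy$ is a probability measure on $B_R$. Applying the classical Jensen inequality for the concave function $H$ with respect to $\mu$ gives
$$
\frac1m\int_{B_R}G(t,x,y)H\bigl(\varphi(y)\bigr)\,dy
\le
H\!\left(\frac1m\int_{B_R}G(t,x,y)\varphi(y)\,dy\right),
$$
that is, writing $w:={\rm e}^{t\Delta}\varphi(x)=\int_{B_R}G(t,x,y)\varphi(y)\,dy\ge 0$,
$$
{\rm e}^{t\Delta}H(\varphi)(x)=\int_{B_R}G(t,x,y)H\bigl(\varphi(y)\bigr)\,dy\le m\,H\!\left(\frac wm\right).
$$

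It then remains to absorb the normalization factor, i.e.\ to show $m\,H(w/m)\le H(w)$ for $0<m\le 1$ and $w\ge 0$. This is exactly where the hypothesis $H(0)\ge 0$ enters: by concavity, writing $w$ as the convex combination $w=m\cdot\tfrac wm+(1-m)\cdot 0$ yields
$$
H(w)\ge m\,H\!\left(\frac wm\right)+(1-m)H(0)\ge m\,H\!\left(\frac wm\right),
$$
since $1-m\ge 0$ and $H(0)\ge 0$. Chaining the two displays gives ${\rm e}^{t\Delta}H(\varphi)(x)\le H(w)=H\bigl({\rm e}^{t\Delta}\varphi(x)\bigr)$, as claimed.

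I expect the only genuine obstacle to be the sub-Markovian nature of the Dirichlet semigroup: were the total mass $m$ equal to $1$ (as for the heat kernel on $\R^2$ or under Neumann conditions) the conclusion would be immediate from Jensen, and the loss of mass is precisely compensated by the sign condition $H(0)\ge 0$. A minor technical point to address is the well-definedness of ${\rm e}^{t\Delta}H(\varphi)$ when $H(\varphi)$ fails to be integrable; since a concave $H$ admits an affine majorant, $H(\varphi)^+$ is controlled whenever $\varphi$ is integrable against $G(t,x,\cdot)$, and the inequality can be read in $[-\infty,+\infty]$ (or obtained by truncation and monotone convergence). In the application of interest $\varphi=U\in\bigcap_{p<\infty}L^p$, so no such degeneracy occurs.
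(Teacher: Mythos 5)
Your proof is correct and follows essentially the same route as the paper's: normalize the sub-Markovian Dirichlet kernel to a probability measure, apply the classical Jensen inequality for concave $H$, and then absorb the mass factor $m=\int_{B_R}G(t,x,y)\,dy\le 1$ via the convex combination $w=m\cdot\frac{w}{m}+(1-m)\cdot 0$ together with $H(0)\ge 0$. Your extra remarks (justifying $m\le 1$ by comparison and addressing integrability of $H(\varphi)$) are points the paper simply takes for granted, so there is nothing to fix.
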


\begin{proof}
 Let $H$ be a concave function and $\varphi\geq
0$ be a measurable function.  By  Jensen's inequality, denoting
$\overline{ G}=\overline{ G}(t,x)=\int _{B_R}G(t,x,y) dy>0$,
we
obtain
\begin{equation*}
H\Big(\frac1 { \overline G(t,x)}\int_{B_R}G(t,x,y) {\varphi(y)}
{dy}\Big)\geq \frac1 { \overline G(t,x)}\int_{B_R}G(t,x,y)H\big(
{\varphi(y)}\big){dy}.
\end{equation*}
Therefore
\begin{equation}\label{I}
H\Big(\frac{{\rm e}^{t\Delta }\varphi}{\overline G
}\Big)\geq\frac1 { \overline G}\ {\rm e}^{t\Delta
}H\left({\varphi}\right).
\end{equation} Moreover,  since  $H$ is concave on $[0,\infty)$,
$H(0)\geq0$,  and $\overline G(t,x)\leq 1$, for any $x\in B_R$ and
$t>0$, we have
$$
H(s)= H\Big(\overline G \frac{s}{\overline G}+(1- \bar G)0 \Big)\geq\overline G\ H\left(\frac{s}{\overline G}\right)+(1-{\overline G}) H(0)
\geq\overline G\ H\left(\frac{s}{\overline G}\right).
$$
So  for $s={\rm e}^{t\Delta }\varphi$ we get
\begin{equation}\label{II}
\frac{H(\textrm{e}^{t\Delta}\varphi)}{\overline G}\geq H\Big(\frac{{\rm
		e}^{t\Delta }\varphi}{\overline G  }\Big).
\end{equation}
Finally, \eqref{I} and \eqref{II} imply  the desired inequality
$\displaystyle
H\left({\rm e}^{t\Delta }\varphi\right)\geq {\rm e}^{t\Delta
}H\left({\varphi}\right).
$
\end{proof}

The next proposition plays an  essential role in our argument.
\begin{proposition}\label{Perron}
	Let $u_0\in L^1(B_R)$, $u_0\geq 0$ and $\bar{u}\in{C}^{1,2}((0,T)\times B_R)\cap{C}((0,T)\times \overline{B_R}) $, $\bar{u}(t,x)\geq 0, $ for any $(t,x)\in (0,T)\times B_R$, satisfying 
	\begin{equation}\label{int0}
	\bar{u}(t,x)\geq {\rm e}^{t\Delta}u_0+\int_{0}^t{\rm e}^{(t-s)\Delta}f(\bar{u}(s))\, ds
	\end{equation}
	 for almost every $(t,x)\in (0,T)\times B_R$. Then, there exists a function $u\in{C}^{1,2}((0,T)\times B_R)\cap{C}((0,T)\times \overline{B_R}) $
	satisfying
	\begin{equation}\label{int1}
	{u}(t,x)={\rm e}^{t\Delta}u_0+\int_{0}^t{\rm e}^{(t-s)\Delta}f({u}(s))\, ds
	\end{equation}
	and 
	\begin{equation}\label{diff1}
	\partial_tu=\Delta u+f(u)
\end{equation}
	for any $(t,x)\in (0,T)\times B_R$. Moreover,  $0\leq u(t,x)\leq\bar{u}(t,x)$ for any $(t,x)\in (0,T)\times B_R$, $u(t,x)=0$ for any $(t,x)\in (0,T)\times\partial B_R$.
\end{proposition}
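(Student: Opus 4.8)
The plan is to run a monotone iteration (Perron's method), exploiting that $f$ is increasing (since $f'>0$) and that the Dirichlet heat semigroup $e^{t\Delta}$ is order preserving, its kernel $G$ being positive. I would define the map
\[
\Phi[w](t):=e^{t\Delta}u_0+\int_0^t e^{(t-s)\Delta}f\bigl(w(s)\bigr)\,ds,
\]
and set $v_0\equiv 0$, $v_{n+1}:=\Phi[v_n]$. First I would check that each $v_n$ is well defined and continuous on $(0,T)\times B_R$: since $u_0\in L^1$, the term $e^{t\Delta}u_0$ is smooth for $t>0$ and continuous on $(0,T)\times\overline{B_R}$ (as recalled in Section~3), while inductively $f(v_n)$ is bounded on every compact subset of $(0,T)\times B_R$ (being dominated by $f(\bar u)$, see below), so the Duhamel integral is continuous there.

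The two monotonicity facts are the heart of the argument. The sequence increases: $v_1=\Phi[0]\ge 0=v_0$ because $u_0\ge 0$, $f(0)\ge 0$ and $G>0$; since $w\mapsto\Phi[w]$ is monotone (composition of the increasing $f$ with the positivity-preserving semigroup), induction gives $v_n\le v_{n+1}$. Moreover $\bar u$ is an upper barrier: $v_0=0\le\bar u$, and if $v_n\le\bar u$ then $v_{n+1}=\Phi[v_n]\le\Phi[\bar u]\le\bar u$, the last inequality being precisely the supersolution property \eqref{int0}. Hence $0\le v_n\uparrow u$ pointwise with $0\le u\le\bar u$. Because $f(v_n)\uparrow f(u)$ and $0\le f(v_n)\le f(\bar u)$, where $\int_0^t e^{(t-s)\Delta}f(\bar u(s))\,ds\le\bar u(t)<\infty$ by \eqref{int0}, monotone convergence lets me pass to the limit inside $\Phi$ and obtain the integral identity \eqref{int1}.

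It remains to upgrade $u$ to a classical solution. I would argue by bootstrap: on every compact $K\subset(0,T)\times B_R$ one has $u\le\bar u$ bounded, hence $f(u)\in L^\infty(K)$; since $e^{t\Delta}u_0$ is smooth for $t>0$ and the Duhamel integral of a locally bounded forcing is locally parabolic-H\"older continuous, $u$ is continuous, then locally H\"older; then $f(u)$ is locally H\"older (as $f\in C^1$), and parabolic Schauder estimates give that the Duhamel term, hence $u$, belongs to $C^{1,2}((0,T)\times B_R)$ and satisfies $\partial_t u=\Delta u+f(u)$, i.e.\ \eqref{diff1}. The bound $0\le u\le\bar u$ is already in hand. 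For continuity up to $\overline{B_R}$ and the boundary values, I would use that both terms in \eqref{int1} are built from the Dirichlet kernel, which vanishes at $\partial B_R$: the linear term lies in $C((0,T)\times\overline{B_R})$ with zero trace, and the Duhamel term is continuous up to $\partial B_R$ with vanishing trace since $f(u(s))\le f(\bar u(s))\in L^1$. This yields $u\in C((0,T)\times\overline{B_R})$ with $u=0$ on $(0,T)\times\partial B_R$.

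The main obstacle I anticipate is the regularity bootstrap together with the integrability control near $t=0$: each step of the monotone scheme must be justified as a genuine continuous function, and both the passage to the limit and the Schauder step must be controlled uniformly on compact subsets, with all the bounds ultimately furnished by the single dominating supersolution $\bar u$ through $f(\bar u)\in L^1$ and the inequality \eqref{int0}.
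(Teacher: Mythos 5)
Your proposal is correct and follows essentially the same route as the paper's proof: a monotone (Perron) iteration whose terms are trapped between $0$ and the supersolution $\bar u$ using the positivity of the Dirichlet heat kernel, the monotonicity of $f$, and the supersolution inequality \eqref{int0}, followed by the Monotone Convergence Theorem to obtain the integral equation and parabolic smoothing to upgrade $u$ to a classical solution vanishing on the boundary. The only cosmetic differences are that the paper starts the iteration at $U^{(0)}={\rm e}^{t\Delta}u_0$ rather than at $0$, and that it delegates the regularity bootstrap, which you spell out via Schauder estimates, to the references \cite{RS} and \cite{FI}.
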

The previous result
% in $\mathbb{R}^N$
is 
essentially
proved in \cite[Theorem~2.1]{RS}
(see also \cite[Proposition~2.1]{FI}).
Here we give a sketch of the proof 
for readers' convenience.
%by modifying the argument in \cite{FI} to $B_R$.
\begin{proof}[Proof of Proposition~\ref{Perron}]
Let us define, for any $(t,x)\in (0,T)\times B_R$ and $n\ge 1$,
$$
U^{(0)}(t,x):
={\rm e}^{t\Delta}u_0 
\ \ { \rm  and}\ \ U^{(n)}(t,x):={\rm e}^{t\Delta}u_0+
\int_0^t{\rm e}^{(t-s)\Delta}f(U^{n-1}(s)) ds.
%, \ \  n\geq 1.
$$
Thanks to the property $\bar u(t,x)\geq  {\rm e}^{t\Delta} u_0$, for $(t,x)\in (0,T)\times B_R$, and the monotonicity of $f$, by induction, we get
$$
0\leq U^{(n)}(t,x)
\leq U^{(n+1)}(t,x)\leq \bar u(t,x) 
\ {\rm for \ any }\ (t,x)\in (0,T)\times B_R \ {\rm and}\ n\geq 1.
$$
Moreover, each term  $U^{(n)}\in{C}^{1,2}((0,T)\times B_R)\cap{C}((0,T)\times \overline{B_R})$ and $U^{(n)}(t,x)=0$ for any $(t,x)\in (0,T)\times \partial B_R$. Therefore,
 $u(t,x)=\lim_{n\to \infty}U^{(n)}(t,x)$ exists and by the Monotone Convergence Theorem it  is a solution to  the integral  equation \eqref{int1}. Moreover, by the regularizing property of the heat kernel we get
  $u\in{C}^{1,2}((0,T)\times B_R)\cap{C}((0,T)\times \overline{B_R}) $. By standard computations, $u$ is also a  solution to the differential equation \eqref{diff1}.
\end{proof}

\section{Existence of a regular mild solution}

 In this Section we prove  Proposition  \ref{extcl}, that guarantees the existence of a regular mild solution to problem  \eqref{eq:1.1} with the singular solution $U$ to the associated elliptic equation as initial data.
The proof follows the same strategy  as in Theorem 2.1, 2) in \cite{IRT1}. 

\medskip
First, we  construct  a supersolution to  problem \eqref{eq:1.1}. To this end
we introduce 
a suitable auxiliary
Cauchy problem with a well--chosen polynomial nonlinearity whose
solutions, via the generalized Cole-Hopf transformation,  can be transformed to {\it supersolutions} of the Cauchy
problem \eqref{eq:1.1}. Let us first remark that
since $f'(s)F(s) \to 1^-$, as $s\to \infty$,  there exists  a positive real number $\beta$ such that 
\begin{equation}\label{beta1}
f'(s)F(s)\leq 1, \ \ \ {\rm for\ any\ } s\geq \beta.
\end{equation}
Therefore, since $(fF)'=f'F-1\le 0$,  the function $f(s)F(s)$ is decreasing for large $s$, and choosing $\beta$ large enough we also have 
\begin{equation}\label{beta2}
\beta-2f(\beta)F(\beta)\geq 0.
\end{equation}
Let us now introduce the auxiliary Cauchy problem.
In order to do so we  define
$$
w_0(x):=\left\{\begin{aligned}&\left(F(U(x))\right)^{-1/2}&&
\text{if}\ \ U(x)> \beta,\\
&\left(F(\beta)\right)^{-1/2}&&
\text{if}\ \ U(x)\leq \beta.\\
\end{aligned} \right.
$$
We remark that 
$$
w_0(x)
=\frac{2}{|x|\sqrt{B(1-2\log|x|)}}\bigl(1+o(1)\bigr)
\ \ {\rm as}\ \   x\to 0,
$$ 
by \eqref{eq:ioku1.17} in Proposition \ref{est}.

Therefore, the function  $w_0$ is not in  $L^{2}$,  but it belongs to any Lorentz space $L^{2,q}$, with $q>2$.
Now, we consider the Cauchy problem
\begin{equation}\label{eq2}
\left\{
\begin{split}
&\partial_tw-\Delta w
=\frac{w^3}{2}
&& \text{in}\ \ B_{R}, \ t>0,
\\
&w(t,x)
=F(\beta)^{-\frac 12}
&& \text{on}\ \partial B_{R}, \ t>0,\\
&w(0,x)=w_0(x).\ \ \ \ \
\end{split}
\right.
\end{equation}
By adapting to the framework of Lorentz spaces the standard
contraction mapping argument developed by Weissler~\cite{W1} and
Brezis--Cazenave~\cite{BC}, it is possible to obtain the following existence result, proved in \cite{IRT1}.

\begin{proposition}[{\cite[Proposition 6.1]{IRT1}}]\label{ex} Let $2<q\leq 5$. There exists a positive time $T=T({w_0})$ and a
	unique solution $w$ of the Cauchy problem \eqref{eq2} such that
	$w\in C([0,T];L^{2,q})$, $t^{3/10}w(t)\in C([0,T];L^{5})$ and
	$\lim_{t\to 0}t^{3/10}\|w(t)\|_{L^5}=0$.
	Moreover, $w\in C^{1,2}((0,T)\times B_R)\cap C((0,T)\times \overline{B_R})$
	and it is   a classical solution of \eqref{eq2} on $(0,T)\times
	B_{R}$,  $w(t,x)\geq (F(\beta))^{-\frac 12}$ for any $(t,x)\in (0,T)\times B_R$ and $w(t,x)\geq {\rm e}^{t\Delta}w_0(x)$, for any $(t,x)\in (0,T)\times B_R$.
\end{proposition}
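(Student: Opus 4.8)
The plan is to construct $w$ by a fixed-point argument carried out in a scale-critical Lorentz space, in the spirit of Weissler and Brezis--Cazenave, the use of Lorentz (rather than Lebesgue) spaces being forced by the fact that $w_0\notin L^2$. First I would homogenise the boundary data: writing $\kappa:=F(\beta)^{-1/2}$ and $w=\kappa+v$, the new unknown $v$ solves the zero-Dirichlet problem $\partial_t v-\Delta v=\tfrac12(\kappa+v)^3$ with $v(0)=w_0-\kappa\ge 0$, the sign coming from the fact that $F$ is decreasing, so that $U>\beta$ forces $F(U)<F(\beta)$ and hence $w_0\ge\kappa$ pointwise. The Duhamel formulation then reads
\[
v(t)={\rm e}^{t\Delta}(w_0-\kappa)+\frac12\int_0^t{\rm e}^{(t-s)\Delta}\bigl(\kappa+v(s)\bigr)^3\,ds=:\Phi(v)(t),
\]
where ${\rm e}^{t\Delta}$ is the Dirichlet heat semigroup. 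Expanding $(\kappa+v)^3=\kappa^3+3\kappa^2 v+3\kappa v^2+v^3$, only the cubic term is scaling-critical; the others carry positive powers of $T$ and will be treated as lower-order perturbations.

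Because $p=3$ in dimension $N=2$ has critical Lebesgue exponent $\tfrac{N(p-1)}2=2$, and $w_0$ lies just outside $L^2$ --- it belongs to $L^{2,q}$ for $q>2$ but not to $L^2$, by \eqref{eq:ioku1.17} and Remark~\ref{RM12} --- I would work in
\[
X_T=\Bigl\{v:\ v\in C([0,T];L^{2,q}),\ t^{3/10}v\in C([0,T];L^5),\ \lim_{t\to0^+}t^{3/10}\|v(t)\|_{L^5}=0\Bigr\},
\]
with $\|v\|_{X_T}=\sup_{[0,T]}\|v(t)\|_{L^{2,q}}+\sup_{(0,T]}t^{3/10}\|v(t)\|_{L^5}$. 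The auxiliary exponent $5$ and weight $3/10$ are dictated by scaling. On the bounded domain $B_R$ the Dirichlet heat kernel obeys a Gaussian upper bound, so by Marcinkiewicz real interpolation of its $L^p$--$L^r$ smoothing one obtains the Lorentz estimates
\[
\|{\rm e}^{t\Delta}\phi\|_{L^5}\le Ct^{-3/10}\|\phi\|_{L^{2,q}},\quad
\|{\rm e}^{t\Delta}\psi\|_{L^5}\le Ct^{-2/5}\|\psi\|_{L^{5/3}},\quad
\|{\rm e}^{t\Delta}\psi\|_{L^{2,q}}\le Ct^{-1/10}\|\psi\|_{L^{5/3}},
\]
the exponents being exactly $\tfrac N2(\tfrac1p-\tfrac1r)$ at the relevant pairs.

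With these in hand the self-map and contraction properties reduce to elementary time integrals. For the critical term I would use $\|v(s)^3\|_{L^{5/3}}=\|v(s)\|_{L^5}^3\le s^{-9/10}\|v\|_{X_T}^3$ together with
\[
\int_0^t(t-s)^{-2/5}s^{-9/10}\,ds=C_0\,t^{-3/10},\qquad \int_0^t(t-s)^{-1/10}s^{-9/10}\,ds=C_1,
\]
both finite because the exponents exceed $-1$; the first controls the $L^5$ part of the Duhamel integral and the second its $L^{2,q}$ part, giving $\bigl\|\tfrac12\int_0^t{\rm e}^{(t-s)\Delta}v^3\,ds\bigr\|_{X_T}\le C\|v\|_{X_T}^3$. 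The subcritical terms $\kappa^3,3\kappa^2 v,3\kappa v^2$ produce integrals with positive powers of $t$, hence a factor $T^\theta$ with $\theta>0$. The decisive structural point, and where $q>2$ is essential, is the small-time vanishing $\lim_{t\to0^+}t^{3/10}\|{\rm e}^{t\Delta}(w_0-\kappa)\|_{L^5}=0$: this holds for $L^{2,q}$ data with $q<\infty$ by density of bounded functions, but fails at the endpoint $L^2=L^{2,2}$. It lets me choose $T$ small so that $\Phi$ maps a small ball of $X_T$ into itself; the contraction estimate follows the same computations applied to $(\kappa+v_1)^3-(\kappa+v_2)^3=(v_1-v_2)\,Q(v_1,v_2)$ with $Q$ quadratic. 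The admissible range $2<q\le5$ is precisely what makes all the Lorentz interpolation and product inequalities close simultaneously.

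The resulting fixed point $v\in X_T$ gives the mild solution $w=\kappa+v\in C([0,T];L^{2,q})$; bootstrapping the Duhamel formula with parabolic smoothing upgrades it to $w\in C^{1,2}((0,T)\times B_R)\cap C((0,T)\times\overline{B_R})$, a classical solution of \eqref{eq2}. The two comparison assertions follow from the maximum principle: since $\partial_t w-\Delta w=w^3/2\ge0$, the boundary value equals $\kappa$, and $w_0\ge\kappa$, the constant $\kappa$ is a subsolution and $w\ge\kappa$; and since the source $w^3/2$ is nonnegative, $w$ dominates the solution of the linear Dirichlet problem with the same initial data and no source, whence $w\ge{\rm e}^{t\Delta}w_0$. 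I expect the main obstacle to be the criticality itself: proving the Lorentz-space smoothing bounds for the Dirichlet semigroup on the bounded domain and the small-time vanishing of the linear evolution, since these are exactly what force the passage to $L^{2,q}$ with $q>2$ and make the contraction close at the critical scaling.
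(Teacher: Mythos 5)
Your proposal is correct and follows essentially the same route as the paper: the paper does not reprove this proposition but quotes it from \cite{IRT1}, where it is established by exactly the Lorentz-space adaptation of the Weissler--Brezis--Cazenave contraction argument that you outline (the same class $C([0,T];L^{2,q})$ with the weighted $L^5$ norm and small-time vanishing, the same scaling exponents, and the same use of positivity and of the Dirichlet semigroup to get $w\geq (F(\beta))^{-1/2}$ and $w\geq {\rm e}^{t\Delta}w_0$). The only slip is your parenthetical claim that the small-time vanishing ``fails at the endpoint $L^2=L^{2,2}$'': by density of bounded functions it holds in $L^{2,q}$ for every $q<\infty$, including $q=2$, and fails only at $L^{2,\infty}$; what fails at $L^2$ is merely the membership $w_0\in L^2$, so this does not affect your argument, since $w_0\in L^{2,q}$ precisely for $q>2$.
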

Then, let us define
\[ \bar {u}(t,x)=F^{-1}(w^{-2}(t,x)) \ \ \ \ {\rm  for \ any \ }(t,x)\in [0,T)\times  B_R,
\]
where $F^{-1}$ is the inverse function of $F$ and $w$ is the
solution constructed in Proposition~\ref{ex}. 
Since $w(t,x)\geq \left(F(\beta)\right)^{-\frac 12}$, for any $(t,x)\in (0,T)\times B_R$, $w(t,x)= \left(F(\beta)\right)^{-\frac 12}$, for any $(t,x)\in (0,T)\times \partial B_R$  and  $F^{-1}(s)$ is decreasing, we have
$$
\bar{u}(t,x)=F^{-1}(w^{-2}(t,x))\geq F^{-1}(F(\beta))=\beta\ \ \ {\rm for \ any \ }(t,x)\in (0,T)\times  B_R
$$
and 
$$
\bar{u}(t,x)=F^{-1}(w^{-2}(t,x))= F^{-1}(F(\beta))=\beta\ \ \ {\rm for \ any \ }(t,x)\in (0,T)\times  \partial B_R.
$$
Moreover,  
$$
\bar{u}(0,x)=F^{-1}(w^{-2}_0(x))=\left\{\begin{aligned}&U(x)&&
\text{if}\ \ U(x)> \beta,\\
&\beta&&
\text{if}\ \ U(x)\leq \beta,\\
\end{aligned} \right.
$$
and so $\bar{u}(0,x)\geq U(x)$.
Now, by direct computations, we obtain
$$
\begin{aligned}
\partial_t\bar{u}-\Delta \bar{u}-f(\bar{u})
=4f(\bar{u})\, w^{-4}|\nabla w|^2 \left(\frac 32-f'(\bar{u})F(\bar{u})\right)\geq 0
\end{aligned}
$$
since $f'(\bar{u})F(\bar{u})\leq 1$ for  any $\bar{u}\geq \beta$.
Therefore, $\bar{u}\in{C}^{1,2}((0,T)\times B_R))\cap C((0,T)\times\overline{B_R})$ and it satisfies
\begin{equation}\label{eq:dif}
\left\{ \begin{array}{ll}
\displaystyle \partial_t\bar{u}\geq\Delta \bar{u}+f(\bar{u}) \ &\hbox{ in }\ B_R, \ t\in (0,T), \vspace{0.2cm} \\
\quad\bar{ u}(t,x) = \beta &\hbox{ on }\ \partial B_R, \ t\in (0,T),\vspace{0.2cm}
\\
\quad \bar{u}(0,x)\geq U(x)\ &\hbox{ in }\ B_R.
\end{array} \right.
\end{equation}
Namely, the
transformed function $\bar u$ is a supersolution to  the original 
problem \eqref{eq:1.1}.

\medskip
 Now, we would like to apply  Perron's monotone method (Proposition \ref{Perron}) to the supersolution $\bar u$, to construct  a mild  solution to  problem \eqref{eq:1.1}, that belongs to $L^\infty_{loc}(0,T;L^\infty)$ (and so it is classical on $(0,T)\times B_R$).  In order to apply Perron's monotone method,  we need to  prove  that the function $\bar{u}$ is also a supersolution to the integral equation associated to the differential equation. This is a consequence  of the 
 Lemma~2.3 in \cite{FI} (see also Remark 6, (1)) and of the following properties
of $\bar u$.

\begin{proposition}\label{prop2}
	The function $\bar{u}(t,x)=F^{-1}(w^{-2}(t,x))$ satisfies the following properties:
	
	\medskip
	\noindent i) $\bar{u}(t,x)\geq {\rm e}^{t\Delta}\bar{u}(0)$ for any $(t,x)\in (0,T)\times B_R$.	
	
	\medskip
	\noindent ii) $\displaystyle	\lim_{t\to 0}\Big\|\int_0^t{\rm e}^{(t-s)\Delta}f(\bar{u}(s))\ ds\Big\|_{L^{\infty}}=0.$
\end{proposition}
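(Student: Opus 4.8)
The plan is to exploit the Cole--Hopf type relation $\bar u = F^{-1}(w^{-2})$ together with the two structural facts about $w$ provided by Proposition~\ref{ex}: that $w(t,x)\ge F(\beta)^{-1/2}$ and that $w(t,x)\ge e^{t\Delta}w_0(x)$ on $(0,T)\times B_R$. Both assertions of the Proposition reduce to scalar comparisons that I would transfer from $w$ to $\bar u$ through the one-variable map $H(s):=F^{-1}(s^{-2})$, whose monotonicity and concavity are governed by the hypotheses $(f1)$, $(f2)$ and by the choice of $\beta$ in \eqref{beta1}--\eqref{beta2}.

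For part i) I would first record that $H$ is increasing and concave on the relevant range. Writing $\bar u=H(w)$ and using $(F^{-1})'(\tau)=-f(F^{-1}(\tau))$, one finds $H'(s)=2s^{-3}f(H(s))>0$ and
\[
H''(s)=2s^{-4}f(H(s))\bigl[2s^{-2}f'(H(s))-3\bigr].
\]
Since $s^{-2}=F(H(s))$, the bracket equals $2f'(H(s))F(H(s))-3$, which is $\le -1<0$ as soon as $H(s)\ge\beta$, by \eqref{beta1}. Thus $H$ is concave on $[m,\infty)$ with $m:=F(\beta)^{-1/2}$, the range on which $H\ge\beta$. I would then extend $H$ to $[0,m]$ by its tangent line at $m$; the extended function $\tilde H$ is concave and increasing on all of $[0,\infty)$, and a direct evaluation gives $\tilde H(0)=\beta-2f(\beta)F(\beta)\ge 0$, precisely by the choice \eqref{beta2}. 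Now Lemma~\ref{jensen} applies to the concave $\tilde H$ with $\tilde H(0)\ge 0$ and $\varphi=w_0\ge m$: since $\tilde H(w_0)=H(w_0)=\bar u(0)$, it yields $e^{t\Delta}\bar u(0)\le\tilde H(e^{t\Delta}w_0)$. Finally, using $w(t)\ge e^{t\Delta}w_0$, $w(t)\ge m$ and the monotonicity of $\tilde H$ (which coincides with $H$ above $m$), one checks, distinguishing the cases $e^{t\Delta}w_0\ge m$ and $e^{t\Delta}w_0<m$, that $\tilde H(e^{t\Delta}w_0)\le H(w(t))=\bar u(t)$, giving $\bar u(t)\ge e^{t\Delta}\bar u(0)$.

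For part ii) the key is the pointwise bound $0\le f(\bar u)\le C_0\,w^2$ with $C_0:=f(\beta)F(\beta)$. Indeed $f(\bar u)=\bigl(f(\bar u)F(\bar u)\bigr)w^2$ because $F(\bar u)=w^{-2}$, and $fF$ is nonincreasing on $[\beta,\infty)$ by \eqref{beta1}, so $f(\bar u)F(\bar u)\le f(\beta)F(\beta)$ whenever $\bar u\ge\beta$. Since $e^{(t-s)\Delta}$ is positivity preserving, this gives
\[
\Bigl\|\int_0^t e^{(t-s)\Delta}f(\bar u(s))\,ds\Bigr\|_{L^\infty}
\le C_0\int_0^t\bigl\|e^{(t-s)\Delta}w^2(s)\bigr\|_{L^\infty}\,ds.
\]
I would then use the $L^{5/2}$--$L^\infty$ smoothing estimate $\|e^{\tau\Delta}g\|_{L^\infty}\le C\tau^{-2/5}\|g\|_{L^{5/2}}$ together with $\|w^2(s)\|_{L^{5/2}}=\|w(s)\|_{L^5}^2$. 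By Proposition~\ref{ex} we may write $\|w(s)\|_{L^5}=s^{-3/10}\varepsilon(s)$ with $\varepsilon(s)\to 0$ as $s\to 0$, whence the integrand is bounded by $C(t-s)^{-2/5}s^{-3/5}\varepsilon(s)^2$. Because $\tfrac25+\tfrac35=1$, the Beta integral $\int_0^t(t-s)^{-2/5}s^{-3/5}\,ds$ is a finite constant independent of $t$, so the right-hand side is $\le C\bigl(\sup_{0<s\le t}\varepsilon(s)\bigr)^2\to 0$ as $t\to 0$, which is exactly ii).

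The main obstacle is the concavity analysis in part i): one must identify the correct scalar transformation $H(s)=F^{-1}(s^{-2})$, verify $H''\le 0$ via the asymptotics $f'F\le 1$, and---crucially---extend $H$ across the cut-off level $m=F(\beta)^{-1/2}$ so that the extension stays concave and nonnegative at the origin. It is precisely here that the seemingly technical condition \eqref{beta2} on $\beta$ is consumed, and making this extension simultaneously compatible with the hypotheses of Lemma~\ref{jensen} and with the comparison $w\ge e^{t\Delta}w_0$ is the delicate point; part ii) is then a fairly standard smoothing estimate once the bound $f(\bar u)\le C_0w^2$ is in hand.
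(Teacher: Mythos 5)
Your proof is correct and follows essentially the same route as the paper: the same concave extension $H$ of $F^{-1}(s^{-2})$ by its tangent line at $F(\beta)^{-1/2}$ combined with Lemma~\ref{jensen} and $w\ge e^{t\Delta}w_0$ for part i), and the same bound $f(\bar u)\le Cw^2$ followed by the $L^{5/2}$--$L^\infty$ smoothing estimate and the Beta integral for part ii). The only (harmless) deviations are that you prove the bound $f(\bar u)\le f(\beta)F(\beta)\,w^2$ directly from the monotonicity of $fF$ instead of invoking Lemma~\ref{FI}, and you make a case distinction where the paper simply uses monotonicity of the extended $H$ on all of $[0,\infty)$.
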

\par \medskip
The proof  of Proposition \ref{prop2} relies on the following result.
\begin{lemma}[{\cite[Lemma~3.1]{FI}}]\label{FI}
	Assume that there exists  some $s_1>0$
	such that
	$$
	f'(s)F(s)\leq 1 \ \ \ \ { for\ all\ } s\geq s_1.
	$$
	Then there exists a constant $C$ such that
	$
f(F^{-1}(t))
\leq Ct^{-1}
	$
	for all  $t<t_0=F(s_1)$.
\end{lemma}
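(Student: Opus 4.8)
The plan is to translate the desired bound into a statement about the product $f(s)F(s)$ and then exploit its monotonicity. Setting $s=F^{-1}(t)$, so that $t=F(s)$, the claimed inequality $f(F^{-1}(t))\le Ct^{-1}$ is equivalent to $f(s)F(s)\le C$ for all $s$ in the range of $F^{-1}$ that corresponds to small $t$. Since $F$ is strictly decreasing with $F(s)\to 0$ as $s\to\infty$, small $t$ corresponds to large $s$; concretely, $t<t_0=F(s_1)$ will force $F^{-1}(t)>s_1$. Hence it suffices to show that $f(s)F(s)$ is bounded on $[s_1,\infty)$.

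First I would compute the derivative of the product. From $F(s)=\int_s^{\infty}f(\tau)^{-1}\,d\tau$ one has $F'(s)=-1/f(s)$, so that
\[
\bigl(f(s)F(s)\bigr)'=f'(s)F(s)+f(s)F'(s)=f'(s)F(s)-1.
\]
By hypothesis $f'(s)F(s)\le 1$ for all $s\ge s_1$, hence $\bigl(fF\bigr)'\le 0$ on $[s_1,\infty)$ and the product $f(s)F(s)$ is non-increasing there. Consequently
\[
f(s)F(s)\le f(s_1)F(s_1)=:C\qquad\text{for all }s\ge s_1.
\]

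It remains to unwind the change of variables, which is where one must be careful with the direction of the inequalities because $F$ and $F^{-1}$ are both decreasing. For $t<t_0=F(s_1)$, monotonicity of $F^{-1}$ gives $s:=F^{-1}(t)>F^{-1}(t_0)=s_1$, so the bound above applies at this $s$. Using $F\bigl(F^{-1}(t)\bigr)=t$ we then obtain
\[
t\,f\bigl(F^{-1}(t)\bigr)=F(s)\,f(s)\le C,
\]
which is precisely $f\bigl(F^{-1}(t)\bigr)\le Ct^{-1}$ for all $t<t_0$.

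The argument is essentially elementary; the only point requiring care---and the one I would regard as the main (mild) obstacle---is the bookkeeping of the monotonicity of $F^{-1}$, so that the condition $t<t_0$ is correctly converted into $s\ge s_1$, placing us in the region where $fF$ is non-increasing. The key structural observation, namely $(fF)'=f'F-1$, has already been recorded in the discussion around \eqref{beta1}.
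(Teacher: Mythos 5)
Your proof is correct. The paper does not prove this lemma itself --- it is quoted from \cite{FI} (Lemma~3.1 there) --- but your argument is the natural one and matches exactly the mechanism the paper already records near \eqref{beta1}: from $(fF)'=f'F-1\le 0$ on $[s_1,\infty)$ the product $f(s)F(s)$ is non-increasing there, hence bounded by $C:=f(s_1)F(s_1)$, and the substitution $s=F^{-1}(t)$ (with the decreasing monotonicity of $F^{-1}$ converting $t<F(s_1)$ into $s>s_1$, as you carefully note) yields $t\,f\bigl(F^{-1}(t)\bigr)\le C$, which is the claim.
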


\begin{proof} [Proof of Proposition \ref{prop2}.] In order to prove i) we recall that   $F(t)$ is decreasing on $(0,\infty)$ and so $\lim_{t\to 0^+}F(t)$ exists  (finite or infinite). We remark that $F^{-1}(t^{-2})$ is concave on 
$(F(\beta)^{-\frac 12},\infty)$,
because 
$$
\begin{aligned}
\frac{d^2}{dt^2}(F^{-1}(t^{-2}))&=-\frac 6{t^{4}}f(F^{-1}(t^{-2}))+\frac 4{t^{6}}f'(F^{-1}(t^{-2}))f(F^{-1}(t^{-2}))\\
&=\frac 4{t^{4}}\left (f'(F^{-1}(t^{-2}))t^{-2}-\frac 32\right)f(F^{-1}(t^{-2}))\leq 0\\
\end{aligned}
$$
for any $t\geq (F(\beta))^{-\frac 12}$, since $f'(s)F(s)-\frac 32\leq 1-\frac 32<0$, when $s\geq\beta$.
Let us define the function
$$
H(t)=
\left\{ \begin{array}{ll}
\displaystyle F^{-1}(t^{-2})  \ &\hbox{ for  }\  t>(F(\beta))^{-\frac 12}, \vspace{0.2cm} \\
\quad y(t) &\hbox{ for  }\ 0\leq t\leq (F(\beta))^{-\frac 12},\vspace{0.2cm}\\
\end{array} \right.
$$
where $y(t)= 2(F(\beta))^{\frac 32}f(\beta)[t-(F(\beta))^{-\frac 12}]+\beta$ is the tangent line to the function $F^{-1}(t^{-2})$
at $t=(F(\beta))^{-\frac 12}$.
We remark that $H(t)$ is concave  on $[0,\infty)$ and $H(0)=y(0)=\beta-2f(\beta)F(\beta)\geq 0$ thank to \eqref{beta2}.
Therefore, since  the function $H(t)$ is increasing and concave on $[0,\infty)$,
$w(t,x)\geq F(\beta)^{-\frac 12}$,  and  thanks to Lemma \ref{jensen},  we have
$$
\begin{aligned}
\bar u(t,x)&=F^{-1}(w(t,x)^{-2})=H(w(t,x))\geq H({\rm e}^{t\Delta}w_0(x))\\
&\geq   {\rm e}^{t\Delta}H(w_0(x))= {\rm e}^{t\Delta}F^{-1}(w_0^{-2}(x))={\rm e}^{t\Delta}
{\bar u(0).}
\end{aligned} 
$$
This concludes the proof of i).

Now property ii) is a consequence of Lemma \ref{FI}.
Indeed, since
$$
\bar{u}(t,x)=F^{-1}(w^{-2}(t,x)) \ \ \ \ {\rm and}\ \ \
\ w^{-2}(t,x)\leq F(\beta),
$$
by applying the previous
{Lemma} \ref{FI}
we get
\begin{equation}\label{eq:6.3c}
\begin{aligned}
\Big\|\int_0^t{\rm
	e}^{(t-s)\Delta}f(\bar u(s))ds\Big\|_{L^{\infty}}
&\leq 
C
\Big\| \int_0^t{\rm e}^{(t-s)\Delta}w^2(s)ds\Big\|_{L^{\infty}}\vspace{0.2cm}\\
&\leq
C\int_0^t\frac{1}{(t-s)^{2/5}s^{3/5}}\, ds\
\Big(\sup_{0<s<t}s^{3/10}\|w(s)\|_{L^{5}}\Big)^2\\
& \leq
\tilde{C}\Big(\sup_{0<s<t}s^{3/10}\|w(s)\|_{L^{5}}\Big)^2
\end{aligned}
\end{equation}
for some $C,\tilde C>0$.
This 
and 
the fact
$\lim_{t\to 0}\sup_{0<s<t}s^{3/10}\|w(s)\|_{L^{5}}=0$
from Proposition~\ref{ex} yields the conclusion.
\end{proof}

Thanks to Proposition \ref{prop2} and \cite[Lemma 2.3 and Remark 6]{FI}, we obtain that $\bar u$ is a supersolution to the integral equation \eqref{int0} with $u_0=U$. Therefore,
 $\bar{u}$ satisfies the hypothesis of Proposition \ref{Perron}, choosing $u_0=U$, and  we conclude that there exists  a function $u\in{ C}^{1,2}((0,T)\times B_R)\cap{ C}((0,T)\times \overline{B_R}) $
satisfying
$$
{u}(t,x)={\rm e}^{t\Delta}U+\int_{0}^t{\rm e}^{(t-s)\Delta}f({u}(s))\, ds
$$
and 
$$
\partial_tu=\Delta u+f(u)$$
for any $(t,x)\in (0,T)\times B_R$. Moreover,  $0\leq u(t,x)\leq\bar{u}(t,x)$ for any $(t,x)\in (0,T)\times B_R$, $u(t,x)=0$ for any $(t,x)\in (0,T)\times\partial B_R$.
%So, by the regularizing properties of the heat kernel, one may prove that 
Furthermore, by $u\in {C}((0,T)\times \overline{B_R})$,
$u\in C((0,T), L^p)$
for all $p\in [1,\infty)$, and hence $u\in C((0,T); Y)$.
Finally,
%thanks to property ii),
% if we define $u(0)=U$ 
we get
$$
\|u(t)-U\|_{L^p}\leq \|u(t)-{\rm e}^{t\Delta}U\|_{L^p}+\|{\rm e}^{t\Delta}U-U\|_{L^p}
$$ 
and both terms in the right hand side of the last inequality  tends to 0 as $t\to 0$ thanks to Proposition~\ref{prop2} and $L^\infty(B_R)\subset L^p(B_R)$. This concludes the proof of Proposition \ref{extcl}.

\section{Non-uniqueness of mild solutions}

In this Section  we prove 
{Theorem~\ref{nonun}}.
Let $u_0=U$.
Let $T>0$ and $u(t)$
be 
the existence time and regular mild solution 
which is constructed in Proposition~\ref{extcl}, respectively.  
We prove that $U$ is a mild solution with $u_0=U$.
Since $U\in Y$ and it does not depend on $t>0$, 
we have $U\in C([0,T];Y)$.
It remains to prove that $U$ satisfies the integral equation~\eqref{eq:6.1}. 

In what follows we use the following notation: 
for a function $K=K(t,x,y):(0,\infty)\times B_R\times B_R\to \mathbb{R}$ we write
\[
\nabla_y K:=\left(\frac{\partial K}{\partial y_1},\frac{\partial K}{\partial y_2}\right),
\ \ 
\Delta_y K:=\frac{\partial^2K}{\partial y_1^2} + \frac{\partial^2 K}{\partial y_2^2},
\ \ 
\Delta_x K:=\frac{\partial^2K}{\partial x_1^2} + \frac{\partial^2 K}{\partial x_2^2}
\]
and
\[
K\in L^{\infty}_y\ \ \text{if $K(t,x,\cdot)\in L^{\infty}(B_R)$
for any $t>0$ and $x\in B_R$},
\]
where $x=(x_1,x_2), y=(y_1,y_2)$.

Fix arbitrary $t>0$. 
It holds 
for any $0<s<t$
that
\begin{equation}
\label{eq:6.1a}
\frac{d}{ds}\left(\textrm{e}^{(t-s)\Delta}U\right)
=
-\Delta \Bigl(\textrm{e}^{(t-s)\Delta}U\Bigr)
=
-\Delta_x \int_{B_R}G(t-s,x,y)U(y)dy,
\end{equation}
where $G:(0,\infty)\times B_R\times B_R \to \mathbb{R}$ is the Dirichlet heat kernel in $B_R$.
Recall that $G$ is a positive $C^{\infty}$ function, $G(t,x,y)=G(t,y,x)$, $G(t,x,\cdot)$ and $\Delta_x G(t,x,\cdot)$ are bounded in $B_R$ for any fixed $x\in B_R,\ t>0$ 
(see \cite[{(16.16) and Theorem~16.3 in p.413}]{LSU}).
Thus, we have
\begin{equation}\label{eq:6.2}
\begin{aligned}
-\Delta_x \int_{B_R}G(t-s,x,y)U(y)dy
&
=
-\int_{B_R}\Delta_x G(t-s,x,y)U(y)dy
\\
&
=
-\int_{B_R}\Delta_y G(t-s,x,y)U(y)dy.
\end{aligned}
\end{equation}
Fix $\delta\in (0,R)$. 
Since $U\in C^2(B_R\setminus B_{\delta})$
satisfies $-\Delta U=f(U)$ in $B_R\setminus B_{\delta}$, 
and $U=0$
on $\partial B_R$, $\left.G(t,x,y)\right|_{y\in \partial B_R}=0$,
we have, by integration by parts, that
\begin{equation}
\label{eq:6.3}
\begin{aligned}
-\int_{B_R\setminus B_{\delta}}\Delta_y G(t-s,x,y)U(y)dy
=
&
\int_{B_R\setminus B_{\delta}}G(t-s,x,y)f\bigl(U(y)\bigr)dy
\\
&
-
\int_{\partial B_{\delta}}\Bigl(\nabla_y G\cdot \nu\Bigr) U(y)dy
+
\int_{\partial B_{\delta}}\Bigl(\nabla U\cdot \nu\Bigr) G(t,x,y)dy,
\end{aligned}
\end{equation}
where $\nu$ is the outer normal vector on $\partial B_{\delta}$.
Applying Proposition~\ref{est} and $G,\nabla_y G\in L^{\infty}_y$, we have
\begin{equation}
\label{eq:6.4}
\begin{aligned}
&
\left|\int_{\partial B_{\delta}}\Bigl(\nabla_y G\cdot \nu\Bigr) U(y)dy\right|
\le
C \delta (-2\log \delta)^{1-\frac{1}{B}+\sigma} \to 0\ \ \text{as}\ \delta \to 0,
\\
&
\left|\int_{\partial B_{\delta}}\Bigl(\nabla U\cdot \nu\Bigr) G(t,x,y)dy\right|
\le
\frac{C}{(-2\log \delta)^{\frac{1}{B}-\sigma}} \to 0\ \ \text{as}\ \delta \to 0.
\end{aligned}
\end{equation}
On the other hand, it follows from $G,\Delta_yG\in L^{\infty}_y$ and $U,f(U)\in L^1(B_R)$ by Remark~\ref{est}
that
\begin{equation}
\label{eq:6.5}
\begin{aligned}
&
\int_{B_R\setminus B_{\delta}}\Delta_y G(t-s,x,y)U(y)dy
\to
\int_{B_R}\Delta_y G(t-s,x,y)U(y)dy\ \ \text{as}\ \delta \to 0,
\\
&
\int_{B_R\setminus B_{\delta}}G(t-s,x,y)f\bigl(U(y)\bigr)dy
\to
\int_{B_R}G(t-s,x,y)f\bigl(U(y)\bigr)dy\ \ \text{as}\ \delta \to 0.
\end{aligned}
\end{equation}
Combining \eqref{eq:6.1a}--\eqref{eq:6.5}, we obtain that
\begin{equation}
\label{eq:6.6}
\begin{aligned}
\frac{d}{ds}\left(\textrm{e}^{(t-s)\Delta}U\right)
&
=
-\Delta_x \int_{B_R}G(t-s,x,y)U(y)dy
\\
&
=
\int_{B_R}G(t-s,x,y)f\Bigl(U(y)\Bigr)dy
\\
&
=
\textrm{e}^{(t-s)\Delta}f(U)
\end{aligned}
\end{equation}
for all $s\in (0,t)$.

Let $\epsilon\in (0,t)$. Integrating the both side of \eqref{eq:6.6} on $(\epsilon,t-\epsilon)$,
we have
\[
\textrm{e}^{\epsilon \Delta}U
=
\textrm{e}^{(t-\epsilon) \Delta}U
+
\int_{\epsilon}^{t-\epsilon}
\textrm{e}^{(t-s)\Delta}f(U)
ds.
\]
It follows from $U\in Y$ 
that 
\[
\|\textrm{e}^{\epsilon\Delta}U-U\|_{L^p}\to 0,\quad
\|\textrm{e}^{(t-\epsilon)\Delta}U-\textrm{e}^{t\Delta}U\|_{L^p}\to 0
\]
as $\epsilon\to 0$ for any $p\in [1,\infty)$. 
Furthermore, it holds that
\[
\begin{aligned}
&
\left\|
\int_{\epsilon}^{t-\epsilon}
\textrm{e}^{(t-s)\Delta}f(U)
ds
-
\int_{0}^{t}
\textrm{e}^{(t-s)\Delta}f(U)
ds
\right\|_{L^p}
\\
&
\le
\left(
\int_{0}^{\epsilon}
(t-s)^{-1+\frac{1}{p}}
ds
+
\int_{t-\epsilon}^t
(t-s)^{-1+\frac{1}{p}}
ds
\right)
\left\|
f(U)
\right\|_{L^{1}}
\to 0\ \ \text{as}\ \epsilon \to 0.
\end{aligned}
\]
Hence, $U$ satisfies the integral equation~\eqref{eq:6.1}.
This completes the proof of Theorem~\ref{nonun}.

\bigskip
\bigskip

\noindent
{\bf Acknowledgments.}

This work was partially funded by JSPS KAKENHI (grant number 21K18582, 23K03179)  and it has been written within the activities of GNAMPA group (Gruppo Nazionale per l'Analisi Matematica, la Probabilit\`a e le loro Applicazioni) of INdAM (Istituto Nazionale di Alta Matematica), Italy.

\begin{bibdiv}
\begin{biblist}

\bib{A}{article}{
	author={Aviles, Patricio},
	title={On isolated singularities in some nonlinear partial differential
		equations},
	journal={Indiana Univ. Math. J.},
	volume={32},
	date={1983},
	number={5},
	pages={773--791},
	issn={0022-2518},
	review={\MR{0711867}},
	doi={10.1512/iumj.1983.32.32051},
}

	\bib{BP}{article}{
		author={Baras, Pierre},
		author={Pierre, Michel},
		title={Crit\`ere d'existence de solutions positives pour des
			\'{e}quations semi-lin\'{e}aires non monotones},
		journal={Ann. Inst. H. Poincar\'{e} Anal. Non Lin\'{e}aire},
		volume={2},
		date={1985},
		pages={185--212},
}
\bib{BC}{article}{
  author={Brezis, Ha\"{\i}m},
   author={Cazenave, Thierry},
   title={A nonlinear heat equation with singular initial data},
  journal={J. Anal. Math.},
   volume={68},
   date={1996},
   pages={277--304},
  issn={0021-7670},
   review={\MR{1403259}},
   doi={10.1007/BF02790212},
}

\bib{BV}{article}{
	author={Buckmaster, Tristan},
	author={Vicol, Vlad},
	title={Nonuniqueness of weak solutions to the Navier-Stokes equation},
	journal={Ann. of Math. (2)},
	volume={189},
	date={2019},
	number={1},
	pages={101--144},
	issn={0003-486X},
	review={\MR{3898708}},
	doi={10.4007/annals.2019.189.1.3},
}

\bib{CGS}{article}{
   author={Caffarelli, Luis A.},
   author={Gidas, Basilis},
   author={Spruck, Joel},
   title={Asymptotic symmetry and local behavior of semilinear elliptic
   equations with critical Sobolev growth},
   journal={Comm. Pure Appl. Math.},
   volume={42},
   date={1989},
   number={3},
   pages={271--297},
   issn={0010-3640},
   review={\MR{0982351}},
   doi={10.1002/cpa.3160420304},
}

\bib{CL}{article}{
   author={Chen, Chiun-Chuan},
   author={Lin, Chang-Shou},
   title={Existence of positive weak solutions with a prescribed singular
   set of semilinear elliptic equations},
   journal={J. Geom. Anal.},
   volume={9},
   date={1999},
   number={2},
   pages={221--246},
   issn={1050-6926},
   review={\MR{1759446}},
   doi={10.1007/BF02921937},
}
\bib{FR}{article}{
   author={de Figueiredo, D. G.},
   author={Ruf, B.},
   title={Existence and non-existence of radial solutions for elliptic
   equations with critical exponent in ${\bf R}^2$},
   journal={Comm. Pure Appl. Math.},
   volume={48},
   date={1995},
   number={6},
   pages={639--655},
   issn={0010-3640},
   review={\MR{1338473}},
   doi={10.1002/cpa.3160480605},
}
\bib{DS}{article}{
	author={De Lellis, Camillo},
	author={Sz\'ekelyhidi, L\'aszl\'o, Jr.},
	title={The Euler equations as a differential inclusion},
	journal={Ann. of Math. (2)},
	volume={170},
	date={2009},
	number={3},
	pages={1417--1436},
	issn={0003-486X},
	review={\MR{2600877}},
	doi={10.4007/annals.2009.170.1417},
}
\bib{DGP}{article}{
	author={Dhanya, R.},
	author={Giacomoni, J.},
	author={Prashanth, S.},
	title={Isolated singularities for the exponential type semilinear
		elliptic equation in $\Bbb R^2$},
	journal={Proc. Amer. Math. Soc.},
	volume={137},
	date={2009},
	number={12},
	pages={4099--4107},
	issn={0002-9939},
	review={\MR{2538571}},
	doi={10.1090/S0002-9939-09-09988-2},
}

\bib{FHIL}{article}{
   author={Fujishima, Yohei},
   author={Hisa, Kotaro},
   author={Ishige, Kazuhiro},
   author={Laister, Robert},
   title={Local solvability and dilation-critical singularities of
   supercritical fractional heat equations},
   language={English, with English and French summaries},
   journal={J. Math. Pures Appl. (9)},
   volume={186},
   date={2024},
   pages={150--175},
   issn={0021-7824},
   review={\MR{4745503}},
   doi={10.1016/j.matpur.2024.04.005},
}

\bib{FI}{article}{
   author={Fujishima, Yohei},
   author={Ioku, Norisuke},
   title={Existence and nonexistence of solutions for the heat equation with
   a superlinear source term},
   language={English, with English and French summaries},
   journal={J. Math. Pures Appl. (9)},
   volume={118},
   date={2018},
   pages={128--158},
   issn={0021-7824},
   review={\MR{3852471}},
   doi={10.1016/j.matpur.2018.08.001},
}

\bib{FI2}{article}{
   author={Fujishima, Yohei},
   author={Ioku, Norisuke},
   title={Solvability of a semilinear heat equation via a quasi scale
   invariance},
   conference={
      title={Geometric properties for parabolic and elliptic PDEs},
   },
   book={
      series={Springer INdAM Ser.},
      volume={47},
      publisher={Springer, Cham},
   },
   isbn={978-3-030-73362-9},
   isbn={978-3-030-73363-6},
   date={[2021] \copyright 2021},
   pages={79--101},
   review={\MR{4279616}},
   doi={10.1007/978-3-030-73363-6\_5},
}

\bib{FI3}{article}{
   author={Fujishima, Yohei},
   author={Ioku, Norisuke},
   title={Global in time solvability for a semilinear heat equation without
   the self-similar structure},
   journal={Partial Differ. Equ. Appl.},
   volume={3},
   date={2022},
   number={2},
   pages={Paper No. 23, 32},
   issn={2662-2963},
   review={\MR{4393181}},
   doi={10.1007/s42985-022-00158-3},
}

\bib{FI4}{article}{
   author={Fujishima, Yohei},
   author={Ioku, Norisuke},
   title={Quasi self-similarity and its application to the global in time
   solvability of a superlinear heat equation},
   journal={Nonlinear Anal.},
   volume={236},
   date={2023},
   pages={Paper No. 113321, 18},
   issn={0362-546X},
   review={\MR{4656279}},
   doi={10.1016/j.na.2023.113321},
}

\bib{FIRT1}{article}{
	author={Fujishima, Yohei},
	author={Ioku, Norisuke},
	author={Ruf, Bernhard},
	author={Terraneo, Elide},
	title={Singular solutions of semilinear elliptic equations with
		exponential nonlinearities in 2-dimensions},
	journal={J. Funct. Anal.},
	volume={289},
	date={2025},
	number={1},
	pages={Paper No. 110922},
	issn={0022-1236},
	review={\MR{4876902}},
	doi={10.1016/j.jfa.2025.110922},
}

\bib{GV}{article}{
	author={Galaktionov, Victor A.},
	author={Vazquez, Juan L.},
	title={Continuation of blowup solutions of nonlinear heat equations in
		several space dimensions},
	journal={Comm. Pure Appl. Math.},
	volume={50},
	date={1997},
	number={1},
	pages={1--67},
	issn={0010-3640},
}
\bib{GG}{article}{
   author={Ghergu, Marius},
   author={Goubet, Olivier},
   title={Singular solutions of elliptic equations with iterated
   exponentials},
   journal={J. Geom. Anal.},
   volume={30},
   date={2020},
   number={2},
   pages={1755--1773},
   issn={1050-6926},
   review={\MR{4081330}},
   doi={10.1007/s12220-019-00277-1},
}
	\bib{HW}{article}{
		author={Haraux, Alain},
		author={Weissler, Fred B.},
		title={Nonuniqueness for a semilinear initial value problem},
		journal={Indiana Univ. Math. J.},
		volume={31},
		date={1982},
		number={2},
		pages={167--189},
		issn={0022-2518},
		review={\MR{0648169}},
		doi={10.1512/iumj.1982.31.31016},
	}
\bib{HI01}{article}{
   author={Hisa, Kotaro},
   author={Ishige, Kazuhiro},
   title={Existence of solutions for a fractional semilinear parabolic
   equation with singular initial data},
   journal={Nonlinear Anal.},
   volume={175},
   date={2018},
   pages={108--132},
}

 \bib{HM}{article}{
 	author={Hisa, Kotaro},
 	author={Miyamoto, Yasuhito},
 	title={Threshold property of a singular stationary solution for semilinear heat equations with exponential growth},
   journal={preprint, arXiv:2409.16549}
 }

\bib{IKNW}{article}{
   author={Ibrahim, Slim},
   author={Kikuchi, Hiroaki},
   author={Nakanishi, Kenji},
   author={Wei, Juncheng},
   title={Non-uniqueness for an energy-critical heat equation on
   $\Bbb{R}^2$},
   journal={Math. Ann.},
   volume={380},
   date={2021},
   number={1-2},
   pages={317--348},
   issn={0025-5831},
   review={\MR{4263686}},
   doi={10.1007/s00208-020-01961-2},
}

\bib{IRT1}{article}{
   author={Ioku, Norisuke},
   author={Ruf, Bernhard},
   author={Terraneo, Elide},
   title={Non-uniqueness for a critical heat equation in two dimensions with
   singular data},
   journal={Ann. Inst. H. Poincar\'{e} C Anal. Non Lin\'{e}aire},
   volume={36},
   date={2019},
   number={7},
   pages={2027--2051},
   issn={0294-1449},
   review={\MR{4020532}},
   doi={10.1016/j.anihpc.2019.07.004},
}

	\bib{IKO01}{article}{
		author={Ishige, Kazuhiro},
		author={Kawakami, Tatsuki},
		author={Okabe, Shinya},
		title={Existence of solutions for a higher-order semilinear parabolic
			equation with singular initial data},
		journal={Ann. Inst. H. Poincar\'{e} C Anal. Non Lin\'{e}aire},
		volume={37},
		date={2020},
		%   number={5},
		pages={1185--1209},
	}
	\bib{IKO02}{article}{
		author={Ishige, Kazuhiro},
		author={Kawakami, Tatsuki},
		author={Okabe, Shinya},
		title={Existence of solutions to nonlinear parabolic equations via
			majorant integral kernel},
		journal={Nonlinear Anal.},
		volume={223},
		date={2022},
		pages={Paper No. 113025, 22},
	}
	
\bib{K}{article}{
   author={Kumagai, Kenta},
   title={Bifurcation diagrams of semilinear elliptic equations for
   supercritical nonlinearities in two dimensions},
   journal={NoDEA Nonlinear Differential Equations Appl.},
   volume={32},
   date={2025},
   number={3},
   pages={Paper No. 38},
   issn={1021-9722},
   review={\MR{4877474}},
   doi={10.1007/s00030-025-01043-9},
}
\bib{LSU}{book}{
   author={Lady\v zenskaja, O. A.},
   author={Solonnikov, V. A.},
   author={Ural\cprime ceva, N. N.},
   title={Linear and quasilinear equations of parabolic type},
   language={Russian},
   series={Translations of Mathematical Monographs},
   volume={Vol. 23},
   note={Translated from the Russian by S. Smith},
   publisher={American Mathematical Society, Providence, RI},
   date={1968},
   pages={xi+648},
   review={\MR{0241822}},
}

\bib{LR}{article}{
	author={Lemari\'e-Rieusset, Pierre Gilles},
	title={Highly singular (frequentially sparse) steady solutions for the 2D
		Navier-Stokes equations on the torus},
	journal={J. Funct. Anal.},
	volume={288},
	date={2025},
	number={4},
	pages={Paper No. 110761, 16},
	issn={0022-1236},
	review={\MR{4832107}},
	doi={10.1016/j.jfa.2024.110761},
}

\bib{L}{article}{
   author={Lions, P.-L.},
   title={Isolated singularities in semilinear problems},
   journal={J. Differential Equations},
   volume={38},
   date={1980},
   number={3},
   pages={441--450},
   issn={0022-0396},
   review={\MR{0605060}},
   doi={10.1016/0022-0396(80)90018-2},
}
\bib{Lu}{article}{
	author={Luo, Xiaoyutao},
	title={Stationary solutions and nonuniqueness of weak solutions for the
		Navier-Stokes equations in high dimensions},
	journal={Arch. Ration. Mech. Anal.},
	volume={233},
	date={2019},
	number={2},
	pages={701--747},
	issn={0003-9527},
	review={\MR{3951691}},
	doi={10.1007/s00205-019-01366-9},
}
\bib{MT}{article}{
	author={Matos, J\'ulia},
	author={Terraneo, Elide},
	title={Nonuniqueness for a critical nonlinear heat equation with any
		initial data},
	journal={Nonlinear Anal.},
	volume={55},
	date={2003},
	number={7-8},
	pages={927--936},
	issn={0362-546X},
	review={\MR{2017235}},
	doi={10.1016/j.na.2003.07.015},
}

	\bib{Na}{article}{
		author={Naito, Y\=uki},
		title={Non-uniqueness of solutions to the Cauchy problem for semilinear
			heat equations with singular initial data},
		journal={Math. Ann.},
		volume={329},
		date={2004},
		number={1},
		pages={161--196},
		issn={0025-5831},
		review={\MR{2052872}},
		doi={10.1007/s00208-004-0515-4},
	}
	
	\bib{Na2}{article}{
		author={Naito, Y\=uki},
		title={An ODE approach to the multiplicity of self-similar solutions for
			semi-linear heat equations},
		journal={Proc. Roy. Soc. Edinburgh Sect. A},
		volume={136},
		date={2006},
		number={4},
		pages={807--835},
		issn={0308-2105},
		review={\MR{2250448}},
		doi={10.1017/S0308210500004741},
	}

\bib{NS}{article}{
   author={Ni, Wei-Ming},
   author={Sacks, Paul},
   title={Singular behavior in nonlinear parabolic equations},
   journal={Trans. Amer. Math. Soc.},
   volume={287},
   date={1985},
   number={2},
   pages={657--671},
   issn={0002-9947},
   review={\MR{0768731}},
   doi={10.2307/1999667},
}

\bib{QS}{book}{
   author={Quittner, Pavol},
   author={Souplet, Philippe},
   title={Superlinear parabolic problems},
   series={Birkh\"{a}user Advanced Texts: Basler Lehrb\"{u}cher.
   [Birkh\"{a}user Advanced Texts: Basel Textbooks]},
   note={Blow-up, global existence and steady states},
   publisher={Birkh\"{a}user Verlag, Basel},
   date={2007},
   pages={xii+584},
   isbn={978-3-7643-8441-8},
   review={\MR{2346798}},
}

\bib{RS}{article}{
   author={Robinson, James C.},
   author={Sier\.{z}\polhk{e}ga, Miko\l aj},
   title={Supersolutions for a class of semilinear heat equations},
   journal={Rev. Mat. Complut.},
   volume={26},
   date={2013},
   number={2},
   pages={341--360},
   issn={1139-1138},
   review={\MR{3068603}},
   doi={10.1007/s13163-012-0108-9},
}

\bib{S}{article}{
   author={Souplet, Philippe},
   title={Universal estimates and Liouville theorems for superlinear
   problems without scale invariance},
   journal={Discrete Contin. Dyn. Syst.},
   volume={43},
   date={2023},
   number={3-4},
   pages={1702--1734},
   issn={1078-0947},
   review={\MR{4548870}},
   doi={10.3934/dcds.2022099},
}

\bib{SW}{article}{
   author={Souplet, Philippe},
   author={Weissler, Fred B.},
   title={Regular self-similar solutions of the nonlinear heat equation with
   initial data above the singular steady state},
   language={English, with English and French summaries},
   journal={Ann. Inst. H. Poincar\'{e} C Anal. Non Lin\'{e}aire},
   volume={20},
   date={2003},
   number={2},
   pages={213--235},
   issn={0294-1449},
   review={\MR{1961515}},
   doi={10.1016/S0294-1449(02)00003-3},
}
\bib{Ta}{article}{
   author={Takahashi, Jin},
   title={Existence of solutions with moving singularities for a semilinear
   heat equation with a critical exponent},
   language={English, with English and French summaries},
   journal={J. Math. Pures Appl. (9)},
   volume={148},
   date={2021},
   pages={128--149},
   issn={0021-7824},
   review={\MR{4223350}},
   doi={10.1016/j.matpur.2021.02.007},
}
\bib{T}{article}{
   author={Tello, J. Ignacio},
   title={Stability of steady states of the Cauchy problem for the
   exponential reaction-diffusion equation},
   journal={J. Math. Anal. Appl.},
   volume={324},
   date={2006},
   number={1},
   pages={381--396},
   issn={0022-247X},
   review={\MR{2262478}},
   doi={10.1016/j.jmaa.2005.12.011},
}

\bib{Te}{article}{
	author={Terraneo, Elide},
	title={Non-uniqueness for a critical non-linear heat equation},
	journal={Communications in Partial Differential Equations},
	volume={27},
	date={2002},
	number={1-2},
	pages={185--218},
	issn={0360-5302,1532-4133},
	doi={10.1081/PDE-120002786},
}

\bib{W1}{article}{
   author={Weissler, Fred B.},
   title={Local existence and nonexistence for semilinear parabolic
   equations in $L\sp{p}$},
   journal={Indiana Univ. Math. J.},
   volume={29},
   date={1980},
   number={1},
   pages={79--102},
   issn={0022-2518},
   review={\MR{0554819}},
   doi={10.1512/iumj.1980.29.29007},
}

\bib{W3}{article}{
   author={Weissler, Fred B.},
   title={$L^p$-energy and blow-up for a semilinear heat equation},
   conference={
      title={Nonlinear functional analysis and its applications, Part 2},
      address={Berkeley, Calif.},
      date={1983},
   },
   book={
      series={Proc. Sympos. Pure Math.},
      volume={45, Part 2},
      publisher={Amer. Math. Soc., Providence, RI},
   },
   isbn={0-8218-1467-2},
   date={1986},
   pages={545--551},
   review={\MR{0843641}},
   doi={10.1090/pspum/045.2/843641},
}

\end{biblist}
\end{bibdiv}

\end{document}